\documentclass[11pt]{amsart}

\usepackage{amsmath, amssymb, amsfonts,enumerate}
\usepackage[all]{xy}
\usepackage{amscd}
\usepackage{comment}
\usepackage{mathtools}
\usepackage{tikz} 
\usepackage{tikz-cd} 
\tikzcdset{diagrams={nodes={inner sep=7.5pt}}}

\usepackage{url}
\usepackage{hyperref}

\newtheorem{theorem}{Theorem}[section]
\newtheorem{lemma}[theorem]{Lemma}
\newtheorem{corollary}[theorem]{Corollary}
\newtheorem{proposition}[theorem]{Proposition}
\newtheorem{theoremletter}{Theorem}

 \theoremstyle{definition}
 \newtheorem{definition}[theorem]{Definition}

\newtheorem{examples}[theorem]{Examples}
\newtheorem{question}{Question}

\numberwithin{equation}{section}
 
\newcommand {\N}{\mathbb{N}} 
\newcommand {\Z}{\mathbb{Z}} 
 
\newcommand {\Q}{\mathbb{Q}} 
 
\newcommand{\F}{\mathbb{F}}

\newcommand{\CC}{\mathcal{C}}

 \newcommand{\Set}{\mathrm{Set}}

\DeclareMathOperator{\Id}{Id}

\begin{document}

\title[LEF-groups and endomorphisms of symbolic varieties]{LEF-groups and  endomorphisms of symbolic varieties}
\author[Xuan Kien Phung]{Xuan Kien Phung}
\email{phungxuankien1@gmail.com}
\subjclass[2010]{14A10, 14A15, 20E36, 20F69, 37B10, 37B15}
\keywords{LEF-group, surjunctive group, Hopfian object, co-Hopfian object,  reversibility, invertibility, algebraic variety, symbolic variety, cellular automata} 
\begin{abstract}
Let $G$ be a group and let $X$ be an algebraic variety over an algebraically closed field $k$ of characteristic zero. Denote $A=X(k)$ the set of rational points of $X$. We investigate invertible algebraic cellular automata $\tau \colon A^G \to A^G$ whose local defining map is induced by some morphism of algebraic varieties $X^M \to X$ where $M$ is a finite memory. When $G$ is locally embeddable into finite groups (LEF), then we show that the inverses of reversible  algebraic cellular automata are automatically algebraic cellular automata. Generalizations are also obtained for finite product Hopfian pointed object alphabets in concrete categories. 
\end{abstract} 
\date{\today}
\maketitle

\setcounter{tocdepth}{1}

\section{Introduction} 

Let us recall briefly notions of symbolic dynamics. Fix a set $A$ called the \emph{alphabet}, and a group  $G$, the \emph{universe}.
A \emph{configuration} $c \in A^G$ is simply a map $c \colon G \to A$. 
The Bernoulli shift action of  $G$ on $A^G$ is  $(g,c) \mapsto g c$, 
where $(gc)(h) \coloneqq  c(g^{-1}h)$ for  $g,h \in G$ and $c \in A^G$. 
Introduced by von Neumann \cite{neumann}, a \emph{cellular automaton} over $G$ and $A$ is a map
$\tau \colon A^G \to A^G$ with a \emph{local defining map} $\mu \colon A^M \to A$ for some finite \emph{memory set} $M \subset G$
and such that 
\begin{equation*} 
\label{e:local-property}
(\tau(c))(g) = \mu((g^{-1} c )\vert_M)  \quad  \text{for all } c \in A^G \text{ and } g \in G.
\end{equation*} 
\par 
Now let $X$  be an  algebraic variety over a field $k$, i.e., a reduced separated $k$-scheme of finite type (cf.~\cite{grothendieck-ega-1-1}). Denote by  $X(k)$ the set of rational points of $X$. The set $CA_{alg}(G,X,k)$ of \emph{algebraic cellular automata} consists of cellular automata $\tau \colon X(k)^G \to X(k)^G$ which  admit a memory  $M\subset G$ and 
a local defining map $\mu \colon X(k)^M \to X(k)$ induced by some $k$-morphism of algebraic varieties 
$f \colon X^M \to X$, i.e., $\mu=f\vert_{X(k)^M}$, 
where $X^M$ is the fibered product of copies of $X$ indexed by $M$. 
\par 
A cellular automaton $\tau \colon A^G \to A^G$ is \emph{reversible} if it is bijective and the inverse map $\tau^{-1} \colon A^G \to A^G$ is also a cellular automaton. It is well-known that if the alphabet $A$ is finite then every bijective cellular automaton $\tau \colon A^G \to A^G$ is reversible (cf., e.g. \cite[Theorem~1.10.2]{ca-and-groups-springer}). 
\par 
Generalizing the result for finite alphabets, \cite[Theorem~1.3]{cscp-alg-ca} shows that over an uncountable algebraically closed base field, every bijective algebraic cellular automaton is reversible. It is then natural to ask: 
\begin{question}
In characteristic zero, are the inverses of reversible algebraic cellular automata also  algebraic cellular automata? 
\end{question}  
\par 
The following example shows us what can go wrong in positive characteristic. Let $\mathbb{A}^1$ be the affine line and let $k$ be the algebraic closure of a finite field $\F_p$ for some prime number $p$. Then for every group $G$, the algebraic cellular automaton $\tau \colon k^G \to k^G$ given by $\tau(x)(g)=x(g)^p$ for all $x \in k^G$, $g \in G$, is reversible but it inverse is clearly not an  algebraic cellular automaton. 
\par 
We have the following result proved in \cite[Theorem~1.4]{cscp-alg-ca}: 

\begin{theorem}
\label{t:invertible-intro}
Let $G$ be a locally residually finite group. Let $X$ be an algebraic variety over an algebraically closed field $k$ of characteristic zero. 
Suppose that $\tau \in CA_{alg}(G,X,k)$ is reversible. Then $\tau^{-1} \in CA_{alg}(G,X,k)$.  
\end{theorem}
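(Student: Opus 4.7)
\emph{Plan and reduction.} The plan is to transfer the problem to a finite-group quotient of $G$ and then invoke an algebro-geometric rigidity result available in characteristic zero. Let $M' \subset G$ be a memory set for $\tau^{-1}$ with (a priori only set-theoretic) local defining map $\mu' \colon X(k)^{M'} \to X(k)$; the goal is to show $\mu'$ is induced by a $k$-morphism $X^{M'} \to X$. Since $\mu'$ depends only on values at $M'$, one may replace $G$ by the finitely generated subgroup $H \coloneqq \langle M \cup M' \rangle$, over which $\tau$ and $\tau^{-1}$ restrict to mutually inverse cellular automata with the same local maps. By local residual finiteness, $H$ is residually finite, so one may assume that $G$ itself is finitely generated and residually finite.

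\emph{Passage to a finite quotient.} Using residual finiteness, pick a normal subgroup $N \trianglelefteq G$ of finite index such that $\pi \colon G \to Q \coloneqq G/N$ is injective on the finite set $M \cup M' \cup \{e\}$. Define finite analogues $\tau_Q, \tau_Q' \colon X(k)^Q \to X(k)^Q$ by
\[
\tau_Q(\bar c)(q) \coloneqq \mu\bigl((\bar c(q\,\pi(m)))_{m \in M}\bigr), \qquad \tau_Q'(\bar c)(q) \coloneqq \mu'\bigl((\bar c(q\,\pi(m')))_{m' \in M'}\bigr).
\]
Then $\tau_Q$ is induced by a $k$-morphism of algebraic varieties $F_Q \colon X^Q \to X^Q$ whose $q$-th component is the composition of $f$ with the relevant projection. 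Because $\tau$ and $\tau^{-1}$ commute with the $G$-action, they preserve the set of $N$-periodic configurations (those factoring through $\pi$), and the identities $\tau \circ \tau^{-1} = \tau^{-1} \circ \tau = \Id$ applied to such configurations yield $\tau_Q \circ \tau_Q' = \tau_Q' \circ \tau_Q = \Id_{X(k)^Q}$. In particular, $F_Q$ is bijective on $k$-points.

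\emph{Rigidity and conclusion.} The central algebro-geometric input is that over an algebraically closed field of characteristic zero, a bijective morphism $F_Q \colon X^Q \to X^Q$ of an algebraic variety to itself is automatically an isomorphism: Ax's theorem promotes injectivity on $k$-points to surjectivity, the absence of inseparability in characteristic zero forces the induced function-field extension to be trivial so that $F_Q$ is birational, and a birational bijective self-morphism is an isomorphism. Granting this, $F_Q^{-1}$ is a $k$-morphism, and its $e$-component $g \colon X^Q \to X$ is a morphism whose value at $\bar c \in X(k)^Q$ equals $\mu'((\bar c(\pi(m')))_{m' \in M'})$. Since $\pi$ is injective on $M'$, this value depends only on the projection of $\bar c$ to $X^{\pi(M')} \cong X^{M'}$, so $g$ factors through a $k$-morphism $X^{M'} \to X$ inducing $\mu'$, and hence $\tau^{-1} \in CA_{alg}(G,X,k)$. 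The main obstacle is to establish the rigidity claim cleanly for a possibly singular $X^Q$; this is exactly the step that forces the characteristic-zero hypothesis, as the Frobenius example in the introduction shows that $F_Q$ can be bijective without admitting an algebraic inverse in positive characteristic.
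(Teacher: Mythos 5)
Your proposal is correct and follows essentially the same route as the paper's proof of its Theorem~A (of which this statement is the locally residually finite special case): transfer the pair of mutually inverse local maps to a self-map of $X^F$ for a finite group $F$ --- here a finite quotient $G/N$ injective on $M \cup M'$, in the paper an LEF-approximation of $M^2$ --- and then use characteristic-zero rigidity to upgrade the set-theoretic inverse to a morphism before restricting back via a section of the projection. The rigidity input you correctly flag as the crux is exactly Nowak's theorem (an injective endomorphism of an algebraic variety over an algebraically closed field of characteristic zero is an automorphism), which the paper cites and which closes the only step your sketch leaves open.
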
 

\par 
Here, recall that a group $G$ is  residually finite if the intersection of the finite-index
subgroups of $G$ reduces to the identity element. For example, locally finite groups,   finitely generated abelian groups, and  finitely generated linear groups are residually finite by a theorem of Mal'cev.  
\par 
The first goal of this present paper is to generalize Theorem~\ref{t:invertible-intro} to the class of groups that are  locally embeddable into finite groups which are also known as initially subfinite groups in the terminology of Gromov.   

\begin{definition} 
A subset $A$ of a group $G$ is said to be \emph{embeddable into a group $H$} if there exists a one-to-one map $\varphi \colon A \to H$ such that for every $a,b \in A$ with $ab \in A$, we have $\varphi(ab)=\varphi(a) \varphi(b)$. 
A group $G$ is  \emph{locally embeddable into finite groups} (LEF), or an \emph{LEF-group}, if every finite subset of $G$ is embeddable into a finite group. 
\end{definition}
\par 
Equivalently, a group is LEF if and only if it is isomorphic to a subgroup of an ultraproduct of finite groups. LEF-groups were  introduced and studied by 
Vershik and Gordon in \cite{lef-group} by  extending the general concepts for algebraic structures introduced by 
Mal'cev in \cite{malcev-lef}, \cite{malcev-lef-2}. 
\par 
We know that 
all locally residually finite groups and all locally residually amenable groups are LEF-groups. While  finitely presented LEF-groups are residually finite, there exist finitely generated amenable groups which are LEF but not residually finite \cite{lef-group}. On the other hand, all LEF-groups are sofic (cf.~\cite{gromov-esav}, \cite{weiss-sgds}) and no finitely presented infinite simple group is a  LEF-group.  
\par 
The main result of the paper is the following  generalization of  Theorem~\ref{t:invertible-intro} to cover the larger class of LEF-group universes: 

\begin{theoremletter}
\label{t:main-intro-invertible} 
Let $G$ be a LEF-group and let $X$ be an algebraic variety over an algebraically closed field $k$ of characteristic zero. 
Suppose that $\tau  \in~CA_{alg}(G,X,k)$ is reversible. Then one has $\tau^{-1} \in CA_{alg}(G,X,k)$. 
\end{theoremletter}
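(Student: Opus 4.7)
The plan is to use the LEF property to approximate $G$ by finite groups $H$, transfer $\tau$ and its inverse to algebraic cellular automata on $X(k)^H$, and then invoke Theorem~\ref{t:invertible-intro} applied to $H$ (which is trivially locally residually finite). Let $M \subset G$ and $\mu \colon X(k)^M \to X(k)$ be a finite memory and algebraic local defining map for $\tau$, and let $N \subset G$ and $\nu \colon X(k)^N \to X(k)$ be a finite memory and local defining map for $\sigma := \tau^{-1}$, which is a cellular automaton by reversibility. After enlarging $M$ and $N$, I may assume $1_G \in M \cap N$. The goal is to show that $\nu$ is induced by a $k$-morphism $X^N \to X$.

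Writing $\sigma \circ \tau = \mathrm{id}_{X(k)^G}$ and evaluating at $1_G$ yields the functional identity
\begin{equation}\label{e:plan-main-id}
\nu\!\left(\bigl(\mu((x_{nm})_{m \in M})\bigr)_{n \in N}\right) = x_{1_G}
\end{equation}
for every $(x_g)_{g \in NM} \in X(k)^{NM}$, where products $nm$ are computed in $G$; a symmetric identity comes from $\tau \circ \sigma = \mathrm{id}$. Set $S := \{1_G\} \cup M \cup N \cup NM \cup MN$; by the LEF property there exist a finite group $H$ and an injection $\varphi \colon S \to H$ with $\varphi(ab) = \varphi(a)\varphi(b)$ whenever $a,b,ab \in S$, which forces $\varphi(1_G) = 1_H$. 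Transporting the local maps via $\varphi$, I define cellular automata $\tau_H, \sigma_H \colon X(k)^H \to X(k)^H$ with respective memories $\varphi(M), \varphi(N)$ and local defining maps induced by $\mu$ and $\nu$. By construction, $\tau_H$ lies in $CA_{alg}(H,X,k)$.

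The key step is to verify that $\sigma_H$ is the two-sided inverse of $\tau_H$. Computing $(\sigma_H \circ \tau_H)(c')(h)$ for $c' \in X(k)^H$ and $h \in H$, and using $\varphi(n)\varphi(m) = \varphi(nm)$ (valid because $nm \in S$), one arrives at the expression $\nu\!\left(\bigl(\mu((c'(h\varphi(nm)))_{m \in M})\bigr)_{n \in N}\right)$, which by \eqref{e:plan-main-id} applied with $x_g := c'(h\varphi(g))$ equals $c'(h\varphi(1_G)) = c'(h)$. The opposite composition is handled symmetrically, so $\tau_H$ is a reversible algebraic cellular automaton over the finite group $H$. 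Since finite groups are locally residually finite, Theorem~\ref{t:invertible-intro} now yields $\sigma_H = \tau_H^{-1} \in CA_{alg}(H,X,k)$.

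It remains to deduce algebraicity of $\nu$ itself. Since $\varphi(N)$ is a valid memory for the algebraic cellular automaton $\sigma_H$, its local defining map on $\varphi(N)$ is obtained by composing $\sigma_H$ with a constant section $X^{\varphi(N)} \to X^H$ (using any $k$-point of $X$) followed by the projection onto the $1_H$-coordinate, all of which are $k$-morphisms. This local map coincides with $\nu$ under the $k$-isomorphism $X^{\varphi(N)} \cong X^N$ induced by $\varphi|_N$, so $\nu$ is algebraic and $\tau^{-1} \in CA_{alg}(G,X,k)$. The main technical obstacle is the careful bookkeeping required to transfer \eqref{e:plan-main-id} to $H$: one must ensure that $S$ contains every product arising in the unfolding of both compositions, and that the non-uniqueness of local defining maps does not obstruct reading off algebraicity of $\nu$ from algebraicity of $\sigma_H$ on some memory.
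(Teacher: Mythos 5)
Your proof is correct, and its skeleton --- reduce to a finite group $H$ via the LEF property, transport the two local defining maps along the partial embedding $\varphi$, verify that the transported maps define mutually inverse cellular automata over $H$, and then recover algebraicity of the local defining map of $\tau^{-1}$ by composing the resulting global morphism on $X^H$ with a constant section and a coordinate projection --- is the same as the paper's. The one genuine difference is how the finite-group case is settled: you invoke Theorem~\ref{t:invertible-intro} as a black box (legitimately, since a finite group is residually finite, hence locally residually finite), whereas the paper argues directly, showing that the transported morphism $\alpha \colon X^F \to X^F$ is injective on $k$-points because it admits a set-theoretic left inverse $\beta$, hence is injective as a morphism over the algebraically closed field, hence is an automorphism by Nowak's theorem in characteristic zero, so that $\beta$ is itself a morphism. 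Your route buys brevity and makes the logical dependence on the residually finite case explicit; the paper's route is self-contained modulo Nowak's theorem and only needs the one-sided identity $\tau^{-1}\circ\tau=\Id$ (injectivity of $\alpha$ suffices), whereas you must check both compositions for $\tau_H$ to certify reversibility before citing Theorem~\ref{t:invertible-intro}. The remaining discrepancies are immaterial bookkeeping: you keep two memories $M,N$ and embed $S=\{1_G\}\cup M\cup N\cup NM\cup MN$, while the paper takes a single symmetric memory $M\ni 1_G$ common to $\tau$ and $\tau^{-1}$ and embeds $M^2$; your $S$ does contain every product arising in the two unfoldings, and your final extraction of the local map attached to the memory $\varphi(N)$ from the globally algebraic $\sigma_H$ is sound because the local defining map associated with a fixed memory set is unique.
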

\par 
To further generalize Theorem~\ref{t:main-intro-invertible}, one may expect that the class of groups that are locally embeddable into locally residually finite groups is larger  than the class of LEF-groups since every finite group is obviously residually finite. 
\par However, the two classes of groups are in fact  equal. Indeed, 
suppose that a finite subset $A$ of a group $G$ is embeddable into a residually finite group $H$ via an injective map $\varphi \colon A \to H$ with  $\varphi(ab)=\varphi(a) \varphi(b)$ for all $a,b \in A$ such that $ab \in A$. Then, since $H$ is residually finite, we can clearly find a finite group $F$ and a homomorphism of groups $\psi \colon H \to F$ which separates the finite set $\{ \varphi(a)\colon a \in A\}$, i.e., $\psi(\varphi(a)) \neq \psi(\varphi(b))$ for all distinct $a,b \in A$. It is immediate that the composition $\psi \circ \varphi \colon A \to F$ is one-to-one and embeds $A$ into the finite group $F$. 
\par 
As another motivation for Theorem~\ref{t:main-intro-invertible}, we recall a general invertibility  result of bijective endomorphisms of symbolic   group varieties. 
\par 
For the notations, let $G$ be a group and let $X$ be an algebraic group over a field $k$. Then the class $CA_{algr}(G,X,k)$ consists of cellular automata $X(k)^G \to X(k)^G$ which  admit for some finite set $M \subset G$ 
a local defining map $\mu \colon X(k)^M \to X(k)$ induced by some $k$-homomorphism of algebraic groups  
$f \colon X^M \to X$, i.e., $\mu=f\vert_{X(k)^M}$. It is clear from the definition that   $CA_{algr}(G,X,k) \subset CA_{alg}(G,X,k)$. 
It is shown in \cite[Theorem~6.4]{phung-2020} that: 
\par

\begin{theorem}
\label{t:intro-general-ca-alg-invertible} 
Let $G$ be a group and let $X$ be an algebraic group over an algebraically closed $k$ of characteristic zero. Suppose that $\tau \in CA_{algr}(G,X,k)$ is bijective. Then one has $\tau^{-1} \in CA_{algr}(G,X,k)$. 
\end{theorem}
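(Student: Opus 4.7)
Write $\tau$ with memory set $M\subset G$ and local rule $\mu=f\vert_{X(k)^M}$ for a $k$-morphism $f\colon X^M\to X$. Since $\tau$ is reversible, $\tau^{-1}$ is a cellular automaton with some finite memory $N\subset G$ and local rule $\nu\colon X(k)^N\to X(k)$; the task is to show that $\nu$ is the restriction of a $k$-morphism $g\colon X^N\to X$. The strategy is to use the LEF hypothesis to transfer the local data of $\tau$ to a cellular automaton on a finite group, invoke Theorem~\ref{t:invertible-intro} in that finite universe (every finite group is trivially locally residually finite), and descend the resulting algebraic inverse back to $G$.

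Fix a finite symmetric patch $E\subset G$ containing $\{1_G\}\cup M\cup N\cup MN\cup NM$. By the LEF axiom, there exist a finite group $F$ and an injection $\varphi\colon E\hookrightarrow F$ with $\varphi(ab)=\varphi(a)\varphi(b)$ whenever $a,b,ab\in E$; set $\tilde M=\varphi(M)$ and $\tilde N=\varphi(N)$. Using $\mu$ and $\nu$ as local rules (now indexed by $\tilde M$ and $\tilde N$), define cellular automata $\tilde\tau,\tilde\sigma\colon X(k)^F\to X(k)^F$. Since $F$ is finite, $X^F$ is an algebraic variety and $\tilde\tau\in CA_{alg}(F,X,k)$ by construction. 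The multiplicative compatibility of $\varphi$ on $E$ shows that, at every site $s\in F$ whose $MN$-neighborhood lies inside $\varphi(E)$, the identities $\tilde\tau\tilde\sigma(c)(s)=c(s)$ and $\tilde\sigma\tilde\tau(c)(s)=c(s)$ hold. Once $E$ is enlarged so this interior condition holds on all of $F$, $\tilde\tau$ is bijective with $\tilde\tau^{-1}=\tilde\sigma$; Theorem~\ref{t:invertible-intro} applied to the finite group $F$ then yields $\tilde\sigma\in CA_{alg}(F,X,k)$, i.e., $\nu$ is the restriction of a $k$-morphism $\tilde g\colon X^{\tilde N}\to X$. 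Pulling back through $\varphi^{-1}\colon\tilde N\to N$ produces the desired morphism $g\colon X^N\to X$, and translation-equivariance of $\tau^{-1}$ transfers the identification $\nu=g\vert_{X(k)^N}$ from the origin to every site.

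The main obstacle is ensuring $\tilde\tau\tilde\sigma=\tilde\sigma\tilde\tau=\Id_{X(k)^F}$ \emph{globally} on $F$. A local embedding $\varphi$ is not a homomorphism, so at sites $s\in F$ whose $MN$-neighborhood exits $\varphi(E)$, neither composition is controlled by the dynamics of $\tau$ on $G$. The LEF axiom produces $F$ only after $E$ is fixed, so one cannot shrink $F$; one must enlarge $E$ and re-embed so that $\varphi(E)$ exhausts enough of $F$ for the interior argument to propagate, or modify $\tilde\tau$ near the boundary without destroying its algebraicity. A cleaner bypass is to use the characterization of LEF-groups as subgroups of ultraproducts of finite groups and transfer the first-order statement that the inverse of $\tau$ has an algebraic local rule of complexity bounded by that of $f$ via an ultraproduct argument, which is expressible because $X$ is of finite type and the set of candidate morphisms $X^N\to X$ of a given bounded degree forms a parametrized algebraic family.
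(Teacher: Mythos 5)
Your proposal does not prove the stated theorem. Theorem~\ref{t:intro-general-ca-alg-invertible} is asserted for an \emph{arbitrary} group $G$ and assumes only that $\tau$ is \emph{bijective}; the paper quotes it from \cite[Theorem~6.4]{phung-2020} without reproving it, and the whole point of that result is that no residual finiteness or LEF hypothesis on $G$ is needed --- the algebraic \emph{group} structure of the alphabet does the work instead. Your argument invokes ``the LEF hypothesis'' from the outset, so at best it addresses Theorem~\ref{t:main-intro-invertible}, not this statement. Two further hypotheses are silently altered: (1) you write ``since $\tau$ is reversible, $\tau^{-1}$ is a cellular automaton with some finite memory $N$'', but reversibility is not given, only bijectivity, and producing a finite memory set for $\tau^{-1}$ is a substantial part of the content (the compactness argument of \cite[Theorem~1.3]{cscp-alg-ca} needs $k$ uncountable, which is not assumed here; $k$ could be $\overline{\Q}$); (2) the conclusion requires $\tau^{-1}\in CA_{algr}(G,X,k)$, i.e.\ that the inverse local rule be a homomorphism of \emph{algebraic groups}, whereas your route through Theorem~\ref{t:invertible-intro} would at most yield a morphism of varieties.

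Even read as an attempt at the LEF theorem, the argument has a hole that you flag but do not close: you need $\tilde\sigma\circ\tilde\tau=\Id$ on all of $X(k)^F$, and neither the interior/boundary discussion nor the appeal to an unspecified ultraproduct transfer achieves this. The paper's mechanism for Theorem~\ref{t:main-intro-invertible} is different and avoids the problem entirely: by Lemma~\ref{l:direct-local}, $\tau^{-1}\circ\tau=\Id$ is \emph{equivalent} to the purely local identity $\eta\circ\tau_M^+=\pi$ on the finite patch $M^2$; one embeds only $M^2$ into a finite group $F$, uses the transported local rules $f$ and $\psi$ (defined on all of $A^E$, hence everywhere on $F$ by the universal property of fibered products) to build $F$-equivariant maps $\alpha,\beta\colon X^F\to X^F$, and the transported local identity $\psi\circ\alpha_E^+=\rho$ together with $F$-equivariance gives $\beta\circ\alpha=\Id$ globally, with no boundary sites to worry about. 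To prove the theorem actually stated here, you would instead need the techniques of \cite[Theorem~6.4]{phung-2020}, which exploit chain conditions on algebraic subgroups and work for every group $G$.
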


As a related result of the  surjunctivity and the reversibility, we also obtain a short proof of the following result (cf.~Section~\ref{s:reversibility}) for endomorphisms of symbolic varieties in arbitrary characteristic by adopting the proof of \cite[Theorem~A]{phung-weakly} using scheme theory.  

\begin{theoremletter}
\label{t:general-surjunctive} 
Let $G$ be a surjunctive group and let $X$ be an algebraic variety over an uncountable algebraically closed field $k$. Then every injective endomorphism  $\tau\in CA_{alg}(G,X,k)$ is reversible. 
\end{theoremletter}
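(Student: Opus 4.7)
The plan is to reduce the statement to two ingredients: first, that over an uncountable algebraically closed base field every bijective algebraic cellular automaton is reversible, which is \cite[Theorem~1.3]{cscp-alg-ca}; and second, that the surjunctivity of $G$ converts injective cellular automata over \emph{finite} alphabets into surjective ones. Once it is shown that the injective $\tau \in CA_{alg}(G,X,k)$ is actually bijective, the first ingredient delivers the cellular automaton $\tau^{-1}$ immediately, so the substance of the proof is the implication $\tau$ injective $\Rightarrow$ $\tau$ surjective.

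The first step is to set up the scheme-theoretic framework underlying the method of \cite[Theorem~A]{phung-weakly}. Let $f \colon X^M \to X$ be the $k$-morphism of varieties inducing the local defining map of $\tau$. For every finite subset $E \subset G$ this yields a morphism of $k$-varieties $f_E \colon X^{ME} \to X^E$ such that $\tau(c)|_E = f_E(c|_{ME})$ for every configuration $c \in X(k)^G$. Using shift-equivariance, the injectivity of $\tau$ on $X(k)^G$ translates into a uniform injectivity condition on the $f_E$ restricted to patterns that agree outside a prescribed finite window; this is the algebro-geometric reformulation of injectivity that will interact with the surjunctivity hypothesis.

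The central step is to exploit the uncountability of $k$ in order to reduce to a finite-alphabet situation. By Chevalley's theorem, the images and fibres of the morphisms $f_E$ are constructible subsets of the relevant $X^E$. Exhausting $G$ by a nested sequence of finite subsets and choosing, at each stage, $k$-points lying outside a countable union of proper constructible subvarieties---permitted because $k$ is uncountable and algebraically closed---one constructs a finite subset $Y \subset X(k)$ such that $\tau$ restricts to a well-defined injective cellular automaton $\tau_Y \colon Y^G \to Y^G$. Since the alphabet $Y$ is finite, the surjunctivity of $G$ applied to $\tau_Y$ yields $\tau_Y(Y^G) = Y^G$, so every $Y$-valued configuration already lies in the image of $\tau$.

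The remaining and, I expect, most delicate step is promoting surjectivity on $Y^G$ to surjectivity on all of $X(k)^G$. The naive compactness argument is unavailable since $X(k)$ need not be compact. Given $y \in X(k)^G$, the fibres $f_E^{-1}(y|_E) \subset X(k)^{ME}$ form an inverse system of constructible sets as $E$ ranges over finite subsets of $G$; the preceding step shows these fibres are non-empty when $y$ is $Y$-valued, and a Zariski density argument combined with the uncountability of $k$ should extend non-emptiness to arbitrary $y$. The main obstacle lies in showing that this inverse system has non-empty projective limit, which I would handle by a Mittag-Leffler-type argument for constructible sets over an uncountable algebraically closed field, following \cite[Theorem~A]{phung-weakly}. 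A point of the projective limit is a preimage of $y$ under $\tau$, whence $\tau$ is bijective and thus, by \cite[Theorem~1.3]{cscp-alg-ca}, reversible.
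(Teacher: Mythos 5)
Your overall reduction is reasonable: it suffices to prove that an injective $\tau$ is surjective, and then reversibility follows (the paper instead constructs the inverse local defining map directly via \cite[Lemma~4.3]{phung-weakly} and Lemma~\ref{l:direct-local}, but quoting \cite[Theorem~1.3]{cscp-alg-ca} at that point is legitimate). However, your central step contains a genuine error. You claim that, by choosing $k$-points outside countably many proper constructible subsets, one can produce a \emph{finite} subset $Y \subset X(k)$ such that $\tau$ restricts to a cellular automaton $Y^G \to Y^G$, and then apply surjunctivity of $G$ to the finite alphabet $Y$. For the restriction to be well defined one needs $f(Y^M) \subset Y$, i.e.\ a finite $f$-invariant set of rational points, and this simply does not exist in general in characteristic zero: take $X = \mathbb{A}^1$ over $k = \C$, $M = \{1_G\}$ and $f(x) = x+1$; no nonempty finite subset of $\C$ is stable under $f$. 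So the finite-alphabet situation cannot be reached by selecting points inside $X(k)$, and the surjunctivity hypothesis is never brought to bear by your argument.

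The correct source of finite alphabets --- and the route the paper takes, following \cite[Theorem~4.2]{phung-weakly} --- is arithmetic: one first uses the uncountability of $k$ and the inverse-limit lemma for nonempty constructible sets (\cite[Lemma~B.2]{cscp-alg-ca}) to show that injectivity forces, for some finite $E_n$, the closed-set inclusion $\Phi_n^{-1}(\Delta_{E_n}) \subset \pi_n^{-1}(\Delta_{\{1_G\}})$ (this is the precise form of your ``uniform injectivity,'' which you assert but do not establish); one then spreads out $X$ and $\mu$ over a finitely generated $\Z$-algebra $R \subset k$ (Lemma~\ref{l:model-finite-data}) and reduces modulo maximal ideals of $R$, whose residue fields are \emph{finite}. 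The resulting cellular automata over the finite alphabets $X_R(R/\mathfrak{m})$ inherit injectivity from the closed inclusion above, are surjective by surjunctivity of $G$, and surjectivity over sufficiently many such reductions is then lifted back to $k$ by a constructibility and inverse-limit argument. Your final Mittag--Leffler step is correct in spirit, but the nonemptiness of the fibres $f_E^{-1}(y|_E)$ for arbitrary $y \in X(k)^G$ is exactly what the reduction-mod-$\mathfrak{m}$ argument supplies; the ``Zariski density argument'' you invoke in its place is not substantiated. As written, the proposal does not yield a proof.
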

\par 
Here, a group $G$ is \emph{surjunctive} if for every finite set $A$, every injective cellular automaton $A^G\to A^G$ is surjective. Surjunctive groups were first introduced by Gottschalk \cite{gottschalk} and it is known that all residually finite groups, all amenable groups, and more generally all sofic groups are surjunctive (see~\cite{{gromov-esav}}, \cite{weiss-sgds}, notably \cite[\S 7.G]{gromov-esav} for much more general results). 
\par 
In the above theorem, if $G$ is only supposed to be \emph{weakly surjunctive}, i.e., for every finite group $A$, every injective cellular automaton $A^G \to A^G$ which is also a homomorphism of abstract groups is automatically bijective, and if $X$ is an algebraic group and $\tau \in CA_{algr}(G,X,k)$, then \cite[Theorem~A]{phung-weakly} shows that the same conclusion of Theorem~\ref{t:general-surjunctive} holds. 
\par 
The paper is organized as follows. We recall basic properties of algebraic varieties in Section~\ref{s:preliminary}. We also recall the notion of induced local maps of cellular automata that allows us to describe a simple but useful local criterion for one-sided invertible cellular automata. In Section~\ref{s:invertibility}, we present the proof of the main result (Theorem~\ref{t:main-intro-invertible}). Then Section~\ref{s:reversibility} contains a short proof of Theorem~\ref{t:general-surjunctive} following the method of the proof of \cite[Theorem~A]{phung-weakly}. 
In Section~\ref{s:ca-concrete-category}, we introduce and investigate the notions of \emph{finite product Hopfian} and \emph{finite product co-Hopfian} objects as well as cellular automata over such alphabets.  
We then formulate in  Section~\ref{s:generalization} a generalization of Theorem~\ref{t:main-intro-invertible} for reversible and one-sided invertible cellular automata over finite product Hopfian and finite product co-Hopfian pointed alphabets in a concrete category. Finally, we give some direct applications and examples in Section~\ref{s:applications}. 

\section{Preliminaries}
\label{s:preliminary}

\subsection{Models of morphisms of finite type}
\label{s:model-finite-data} 

We shall need the following   auxiliary lemma in algebraic geometry for the proof of Theorem~\ref{t:general-surjunctive}:  
\begin{lemma}
\label{l:model-finite-data}
Let $X, Y$ be algebraic varieties over a field $k$. Let $f_i \colon X^{m_i} \to Y^{n_i}$, $m_i , n_i \in \N$, $i \in I$, be finitely many morphisms of $k$-algebraic varieties. Then there exist a finitely generated $\Z$-algebra $R \subset k$ and $R$-schemes of finite type $X_R$, $Y_R$ and $R$-morphisms $f_{i,R} \colon (X_R)^{m_i} \to (Y_R)^{n_i}$ of $R$-schemes with $X=X_R \otimes_R k$, $Y= Y_R \otimes_R k$, and $f_i=f_{i,R} \otimes_R k$ (base change to $k$). Moreover, if $X=Y$, one can take $X_R=Y_R$ and if $f_i$ is a closed immersion, one can also choose $f_{i, R}$ to be a closed immersion. 
\end{lemma}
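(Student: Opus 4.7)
The plan is a direct application of the standard ``spreading-out'' principle: since the algebraically closed field $k$ is the directed union of its finitely generated $\Z$-subalgebras, any finite piece of data defined over $k$ is already defined over a sufficiently large such subalgebra. One only needs to enlarge $R$ finitely many times to absorb all the data at hand.

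First, I would descend $X$ and $Y$ separately. Choose a finite affine open cover $X = \bigcup_\alpha U_\alpha$ with $U_\alpha = \Spec\bigl(k[t_1, \dots, t_{n_\alpha}]/I_\alpha\bigr)$; because $X$ is separated, each intersection $U_\alpha \cap U_\beta$ is again affine, and the inclusions $U_\alpha \cap U_\beta \hookrightarrow U_\alpha, U_\beta$ are encoded by ring homomorphisms between finitely generated $k$-algebras. Each ideal $I_\alpha$ is finitely generated, and each such gluing homomorphism is specified by finitely many coefficients in $k$. Adjoining all these coefficients to $\Z$ produces a finitely generated $\Z$-subalgebra $R_X \subset k$, and reading the same recipe over $R_X$ assembles an $R_X$-scheme of finite type $X_{R_X}$ with $X_{R_X} \otimes_{R_X} k \simeq X$. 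Do the same for $Y$ over some $R_Y$, and set $R_1 \coloneqq R_X[R_Y] \subset k$, a finitely generated $\Z$-algebra over which both $X$ and $Y$ descend. If $X = Y$, carry out the descent only once so that $X_{R_1} = Y_{R_1}$ by construction.

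Next, each morphism $f_i \colon X^{m_i} \to Y^{n_i}$ is determined, locally on affine pieces of source and target, by finitely many polynomial expressions with coefficients in $k$, together with finitely many compatibility identities on overlaps. Adjoining all of these additional coefficients to $R_1$ over all $i$ (a finite operation, since there are only finitely many $f_i$) yields a finitely generated $\Z$-subalgebra $R \subset k$ over which each $f_i$ descends to an $R$-morphism $f_{i,R} \colon (X_R)^{m_i} \to (Y_R)^{n_i}$, where $X_R \coloneqq X_{R_1} \otimes_{R_1} R$ and likewise for $Y_R$. Base-changing back to $k$ recovers $X$, $Y$, and each $f_i$.

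Finally, if $f_i$ is a closed immersion, then locally it corresponds to a surjection of finitely generated $k$-algebras whose kernel is finitely generated. Adjoining the finitely many coefficients of a chosen finite set of generators of each such kernel, and enlarging $R$ once more, forces $f_{i,R}$ to be given locally by a surjection of finitely generated $R$-algebras, hence to be a closed immersion. The only genuine subtlety is bookkeeping: one must organize the affine coverings, gluing data, polynomial coefficients, and kernel generators into a single finitely generated $\Z$-subalgebra. Since every step only adjoins finitely many elements of $k$ and only finitely many steps are performed, the resulting $R$ remains finitely generated over $\Z$. This is essentially the content of the standard spreading-out results in EGA IV, \S 8, which I would simply invoke if a more compressed exposition is preferred.
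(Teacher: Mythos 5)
Your argument is correct and is exactly the standard spreading-out argument whose content the paper simply cites from EGA IV, \S 8 (Scholie 8.8.3 and Proposition 8.9.1); you even note at the end that you would invoke those results directly. So you have merely unpacked the same reference the paper uses, and the proof is fine.
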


\begin{proof}
See, e.g., \cite[Section~8.8]{ega-4-3}, in particular  \cite[Scholie~8.8.3]{ega-4-3}, and  \cite[Proposition~8.9.1]{ega-4-3}. 
\end{proof}

\subsection{Induced local maps} 
\label{s:induced-map}
For the notations, let $G$ be a group and let $A$ be a set. Let $\tau \colon A^G \to A^G$ be a cellular automaton. Fix a  memory set $M$ and the corresponding  local defining map $\mu \colon A^M \to A$. For every finite subset $E \subset G$, we denote by $\tau_E^+ \colon A^{E M } \to A^E$ the induced local map of $\tau$ by setting 
$\tau_E^+(x)(g) = \mu ((g^{-1}(x))\vert_M)$ for all $x \in A^{EM}$ and $g \in E$. 
\par 
We have the following simple lemma: 

\begin{lemma}
\label{l:direct-local}
Let $G$ be a group and let $A$ be a set. Let $\tau, \sigma \colon A^G \to A^G$ be respectively cellular automata with local defining maps $\mu, \eta \colon A^M \to A$ for some  common memory set $M \subset G$ such that $1_G \in M$. Denote by $\pi \colon A^{M^2} \to A^{\{1_G\}}$ the canonical projection. Then the following hold: 
\begin{enumerate}[\rm (i)]
    \item $\sigma \circ \tau= \Id$ if and only if $\eta \circ \tau_M^+= \pi$;
    \item 
    $\tau \circ \sigma= \Id$ if and only if $\mu \circ \sigma_M^+= \pi$.
\end{enumerate}
\end{lemma}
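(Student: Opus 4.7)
The plan is to verify (i) by unwinding the definitions of cellular automaton composition, and then obtain (ii) by symmetry (swapping the roles of $\tau$ and $\sigma$, and of $\mu$ and $\eta$). The key observation is that since $1_G \in M$, the set $M^2 = M \cdot M$ contains $M$, so $M^2$ is a common memory set for $\tau$, for $\sigma$, for their composition $\sigma \circ \tau$, and for $\Id$; the point is then to identify the local defining maps on $A^{M^2}$.

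First, I would compute the local defining map of $\sigma \circ \tau$ with respect to the memory set $M^2$. For $c \in A^G$ and $g \in G$, write $y \coloneqq (g^{-1}c)\vert_{M^2} \in A^{M^2}$. Using the definitions,
\[
(\sigma \circ \tau)(c)(g) = \eta\bigl((g^{-1}\tau(c))\vert_M\bigr),
\]
and for each $m \in M$ one has $(g^{-1}\tau(c))(m) = \tau(c)(gm) = \mu\bigl(((gm)^{-1}c)\vert_M\bigr)$. A direct comparison with the definition of $\tau_M^+$ shows that $(g^{-1}\tau(c))\vert_M = \tau_M^+(y)$, so that
\[
(\sigma \circ \tau)(c)(g) = (\eta \circ \tau_M^+)(y).
\]
On the other hand, $\Id(c)(g) = c(g) = (g^{-1}c)(1_G) = \pi(y)$. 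Thus both sides of (i) are reformulated as pointwise equalities in terms of $y$.

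Now for the implication ($\Leftarrow$) in (i): if $\eta \circ \tau_M^+ = \pi$ on $A^{M^2}$, the above two displays immediately yield $(\sigma \circ \tau)(c)(g) = c(g)$ for every $c$ and $g$, hence $\sigma \circ \tau = \Id$. For ($\Rightarrow$), given any $y \in A^{M^2}$, extend $y$ arbitrarily to a configuration $c \in A^G$ with $c\vert_{M^2} = y$ (which is possible because $M^2$ is just a subset of $G$). Taking $g = 1_G$ in the computation above gives $(\eta \circ \tau_M^+)(y) = (\sigma \circ \tau)(c)(1_G) = c(1_G) = y(1_G) = \pi(y)$, establishing the local identity on all of $A^{M^2}$.

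Finally, (ii) follows from (i) by symmetry: exchanging the roles of $(\tau, \mu)$ and $(\sigma, \eta)$ turns the statement ``$\sigma \circ \tau = \Id$ iff $\eta \circ \tau_M^+ = \pi$'' into ``$\tau \circ \sigma = \Id$ iff $\mu \circ \sigma_M^+ = \pi$''. I do not expect any serious obstacle; the proof is essentially a bookkeeping exercise, and the only mildly delicate point is to keep the shift conventions straight when identifying $(g^{-1}\tau(c))\vert_M$ with $\tau_M^+\bigl((g^{-1}c)\vert_{M^2}\bigr)$.
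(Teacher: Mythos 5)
Your proof is correct and follows essentially the same route as the paper: both arguments reduce to the identity $(g^{-1}\tau(c))\vert_M=\tau_M^+\bigl((g^{-1}c)\vert_{M^2}\bigr)$ (which the paper gets from $G$-equivariance of $\tau$ and you verify by direct computation), prove ($\Rightarrow$) by extending an arbitrary $y\in A^{M^2}$ to a configuration and evaluating at $1_G$, prove ($\Leftarrow$) pointwise at every $g$, and dispose of (ii) by symmetry.
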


\begin{proof}
Suppose first that $\sigma \circ \tau = \Id$. Let $x \in A^{M^2}$ and let $y \in A^G$  be an arbitrary configuration such that $y \vert_{M^2}=x$. We find from the very definition of local defining maps that: 
\begin{align*}
\eta ( \tau_M^+(x)) & = \eta(\tau(y)\vert_M) \\
& = (\sigma (\tau(y)))(1_G)
\\ & = y(1_G) = \pi(x) 
\end{align*}
\par 
It follows that $\eta \circ \tau_M^+= \pi$. 
Conversely, suppose that  $\eta \circ \tau_M^+= \pi$. Then for every $y \in A^G$ and $g \in G$, we find from the $G$-equivariance of $\tau$ and $\sigma$ that: 
\begin{align*}
(\sigma (\tau(y)))(g) & = 
\eta((g^{-1}\tau(y))\vert_M)
\\& = \eta (\tau(g^{-1}y)\vert_M)
\\& = \eta (\tau_M^+((g^{-1}y)\vert_{M^2}))
\\& = \pi(((g^{-1}y))\vert_{M^2})\\& = y(g). 
\end{align*}
\par 
Therefore, we deduce that $\sigma \circ \tau = \Id$. This terminates the proof of the point (i). The proof of (ii) is completely similar. 
\end{proof}

\subsection{Restriction cellular automata}
In order to reduce to the case of finitely generated group universes, we shall need the following useful lemma. 

\begin{lemma}
\label{l:induction-restriction} 
Let $G$ be a group and let $A$ be a set. Let $\tau \colon A^G \to A^G$ be a cellular automaton. Let $\mu \colon A^M \to A$ be a local defining map of $\tau$ for some finite memory $M \subset G$.  Let $H$ be the subgroup generated by $M$ and let $\tau_H \colon A^H \to A^H$ be the cellular automaton which also admits $\mu \colon A^M \to A$ as a local defining map. Then $\tau$ is injective, resp. surjective, if and only if $\tau_H$ is injective, resp. surjective. 
\end{lemma}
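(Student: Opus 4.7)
The plan is to exploit the fact that $M \subset H$, so the local rule of $\tau$ at any point $g \in G$ only reads values at $gM \subset gH$; consequently $\tau$ should decompose as a product of cellular automata indexed by the left cosets of $H$ in $G$, each factor being (a copy of) $\tau_H$. First, I would fix a set $T \subset G$ of representatives for $G/H$, so that $G = \bigsqcup_{t \in T} tH$. This gives a canonical bijection
\[
\Phi \colon A^G \xrightarrow{\ \sim\ } \prod_{t \in T} A^H, \qquad c \longmapsto (c_t)_{t \in T},
\]
where $c_t \in A^H$ is defined by $c_t(h) = c(th)$ for all $h \in H$.

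Next, I would verify by a short computation with the local formula for $\tau$ that under the bijection $\Phi$ the map $\tau$ corresponds exactly to the product map $\prod_{t \in T} \tau_H$. Indeed, for $c \in A^G$, $t \in T$ and $h \in H$, we have $hm \in H$ for every $m \in M$, and hence $(h^{-1}t^{-1}c)(m) = c(thm) = c_t(hm) = (h^{-1}c_t)(m)$. Therefore
\[
\tau(c)(th) \;=\; \mu\bigl((h^{-1}t^{-1}c)\vert_M\bigr) \;=\; \mu\bigl((h^{-1}c_t)\vert_M\bigr) \;=\; \tau_H(c_t)(h),
\]
which shows that $(\tau(c))_t = \tau_H(c_t)$ for every $t \in T$, i.e.\ $\Phi \circ \tau = \bigl(\prod_{t \in T} \tau_H\bigr) \circ \Phi$.

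Finally, to conclude I would invoke the elementary fact that a Cartesian product of maps $(f_i)_{i \in I}$ is injective (resp.\ surjective) if and only if each $f_i$ is injective (resp.\ surjective). Since every factor in the product $\prod_{t \in T} \tau_H$ is literally the same map $\tau_H$, both equivalences claimed in the lemma follow at once from the commutative diagram above.

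The only delicate point is the bookkeeping in the second step: one must insist on \emph{left} cosets $tH$, because the local rule depends on the values $c(gm)$ with $m \in M$, and $gm \in gH$ for $g \in tH$. Once this convention is fixed, no real obstacle arises; the decomposition is entirely formal and the verification is a routine translation calculation.
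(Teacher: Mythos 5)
Your proof is correct; the paper itself gives no argument but simply cites Ceccherini-Silberstein and Coornaert's \emph{Induction and restriction of cellular automata}, and your coset decomposition $A^G \cong \prod_{t\in T} A^H$ intertwining $\tau$ with $\prod_{t\in T}\tau_H$ is exactly the standard proof found there. The computation $\tau(c)(th)=\tau_H(c_t)(h)$ is verified correctly, and the reduction to the elementary fact about products of maps (noting $T\neq\varnothing$) finishes the argument.
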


\begin{proof}
See the main result of \cite{csc-induction}.
\end{proof}

\section{LEF-groups and invertibility of reversible endomorphisms of symbolic varieties} 
\label{s:invertibility}
The goal of the section is to give a proof of Theorem~\ref{t:main-intro-invertible}. To recall the notations, let $G$ be a LEF-group and let $X$ be an algebraic variety over an algebraically closed field $k$ of characteristic zero. 
\par 
Suppose that we are given a reversible   $\tau  \in~CA_{alg}(G,X,k)$. We need to show that $\tau^{-1}\in CA_{alg}(G,X,k)$ as well.  

\begin{proof}[Proof of Theorem~\ref{t:main-intro-invertible}] 
Since $\tau$ is reversible by hypothesis, its inverse $\tau^{-1}$ is also a cellular automaton. Therefore, we can choose a finite symmetric set $M \subset G$ containing $1_G$ such that $M$ is a memory set of both $\tau$ and $\tau^{-1}$. 
Let $\mu \colon X^M \to X$ be the morphism of algebraic varieties which serves as the local defining map of $\tau$. Similarly, we have a local defining map $\eta \colon X(k)^M \to X(k)$ of $\tau^{-1}$. Our goal is to prove that $\eta$ is induced by a morphism of algebraic varieties $X^M \to X$.  
\par 
By the universal property of fibered products, we can define a morphism $\tau_M^+ \colon X^{M^2} \to X^M$ of algebraic varieties given by $\tau_M^+(z)(g)= \mu((g^{-1}z)\vert_M)$ for all $z \in X^M$ and $g \in M$. Let $\pi\colon  A^{M^2} \to A^{\{1_G\}}$ be the canonical  projection given by $v \mapsto v(1_G)$. Since $\tau^{-1} \circ \tau=\Id$, we infer from Lemma~\ref{l:direct-local} the relation $\eta\circ \tau_M^+=\pi$. 
\par 
Now, since $G$ is an LEF-group, we can find a finite group $F$ and an embedding  $\varphi \colon M^2 \to F$ such that $\varphi(ab)= \varphi(a) \varphi(b) $ for all $a,b \in M$ (note that $ab\in M^2$).  Moreover, up to  restricting to the subgroup of $F$ generated by the images $\varphi(a)$ for $a \in M$, we can clearly suppose without loss of generality that $F$ is generated by the finite set $E=\{\varphi(a) \colon a \in M\}=\varphi(M)$. 
\par 
Observe that since $M$ is symmetric and contains $1_G$ and since $\varphi$ is an embedding, $E= \varphi(M)$ is also a finite symmetric subset of $F$ and that $1_F=\varphi(1_G) \in E$. Note also that $E^2=\varphi(M^2)$. 
\par 
We define a trivial isomorphism of algebraic varieties $\delta_E \colon X^M \to X^E$ given by $x  \mapsto y$ where  $y(h)=x(\varphi^{-1}(h))$ for all $x \in X^M$ and $h \in E$. 
\par 
Consider the morphism of algebraic varieties $f \colon X^E \to X$ induced by $\mu$ via the reindexing bijection $M \to E$, $p \mapsto \varphi(p)$, that is, $f= \mu \circ \delta_{E}^{-1}$. More concretely, we set $f(x)=\mu(y)$ for all $x \in X^E$, $y\in X^M$ with $y(g)= x(\varphi(g))$ for all $g \in M$. 
\par 
By the universal property of fibered products, the morphism $f$ in turn induces an $F$-equivariant morphism of algebraic varieties 
\[
\alpha \colon X^F \to X^F
\]
defined by $\alpha(x)(h)= f((h^{-1}x)\vert_E)$ for all $x \in X^F$ and $h \in F$. 
\par 
Similarly, the map $\eta$ induces a set map $\psi \colon X(k)^E \to X(k)$ via the bijection $M \to E$, $p \mapsto \varphi(p)$  and another $F$-equivariant map 
\[
\beta \colon X(k)^F \to X(k)^F
\]
defined by $\beta(x)(h)= \psi((h^{-1}x)\vert_E)$ for all $x \in X^F$ and $h \in F$. 
\par 
We claim that $\beta \circ \alpha = \Id_{X(k)^F}$. Indeed, 
consider the morphism of algebraic  varieties $\alpha_E^+ \colon X^{E^2} \to X^E$  given by $\alpha_E^+(x)(h)= f((h^{-1}x)\vert_E)$ for all $x \in X^E$ and $h \in E$. Let $\rho \colon X^{E^2} \to X^{\{1_F\}}$ be the canonical projection $u \mapsto u(1_G)$. 
\par 
Observe that  
since $\eta\circ \tau_M^+=\pi$ and since $\varphi$ embeds $M^2$ into $F$, we deduce from our construction that $\psi \circ \alpha_E^+=\rho$. 
Therefore, by our definition of the maps $\alpha$ and $\beta$, it follows immediately that for every $x \in X(k)^F$ and $h \in F$, we have: 
\begin{align*}
    (\beta(\alpha(x)))(h) & = \psi((h^{-1}\alpha(x))\vert_E)\\
    & = \psi((\alpha(h^{-1}x))\vert_E)\\
    &=\psi(\alpha_E^+((h^{-1}x)\vert_{E^2}))\\ 
    & = (h^{-1}x)(1_F)\\
    &= x(h).
\end{align*}
\par 
Consequently, we find that $\beta \circ \alpha= \Id$ and the claim is proved. In particular, we deduce that the restriction of $\alpha$ to the set $X(k)$ is injective. 
\par 
Since $k$ is algebraically closed, it follows that $\alpha$ is  injective as a morphism of algebraic varieties (cf., e.g.~\cite[Lemma~A.20, Lemma~A.22]{cscp-alg-ca}). 
As the base field $k$ has  characteristic zero, the main result of Nowak in  \cite{nowak} shows that the morphism $\alpha$ is in fact an  automorphism of algebraic varieties. 
\par In particular, it follows that the map $\beta$ is induced by a morphism of algebraic varieties that we denote by $\gamma \colon X^F \to X^F$. 
\par 
Fix $c \in X(k)^{F\setminus M}$ and let $\iota \colon X^E \to X^F$ be the closed  immersion given by $x \mapsto (x,c)$. Let $\omega \colon X^F \to X$ denote the canonical projection $x \mapsto x(1_F)$. 
\par 
Therefore, we obtain a morphism of algebraic varieties $\nu \colon X^M \to X$ given by the composition: 
\begin{equation*}
   \nu \coloneqq  \omega \circ \gamma \circ \iota \circ \delta_E, 
\end{equation*}
that is, we have 
$\nu (x)=(\gamma(\iota (\delta_E(x))))(1_F)$ for all $x \in X^M$. 
\par 
By construction, it is clear that $\eta= \nu\vert_{X(k)^M}$ and thus we can conclude that $\tau^{-1} \in CA_{alg}(G,X,k)$. 
The proof of the theorem is complete. 
\end{proof}

\section{Reversibility of injective endomorphisms of symbolic varieties} 
\label{s:reversibility}
To recall the notations for the proof of Theorem~\ref{t:general-surjunctive}, let $G$ be a surjunctive group and let $X$ be a reduced scheme of finite type over an uncountable algebraically closed field $k$. We fix a finite memory set $M \subset G$ of an injective $\tau \in CA_{alg}(G,X,k)$ such that $1_G \in M$. We need to show that $\tau$ is bijective and reversible. 

\begin{proof}[Proof of Theorem~\ref{t:general-surjunctive}]
Suppose first that $G$ is countable so that we can find an exhaustion $(E_n)_{n \in \N}$ of $G$ such that $1_G \in E_0$ and $G= \cup_{n \in \N} E_n$. 
\par 
For every $n \in \N$, 
we have a 
$k$-morphism $\tau_{E_n}^+ \colon X^{E_n M} \to X^{E_n}$ of algebraic varieties defined in Section \ref{s:induced-map} and a $k$-morphism of algebraic varieties 
\begin{equation}
\label{e:diagonal-reversible}
\Phi_n \coloneqq \tau_{E_n}^+ \times \tau_{E_n}^+ \colon  X^{E_nM}\times_k  X^{E_n M}\to  X^{E_n}\times_k X^{E_n }.
\end{equation} 
\par 
Let us define $\pi_{n} \colon X^{E_n M}\times_k  X^{E_n M}\to X^{\{1_G\}} \times_k  X^{\{1_G\}}$ to be the canonical projection. 
For every finite subset $E \subset G$, the diagonal of $X^E \times_k X^E$ is denoted by $\Delta_E$.  
Let us consider the  constructible subset of $X^{E_n M}\times_k  X^{E_n M}$: 
\begin{equation} 
V_n \coloneqq \Phi_n^{-1}(\Delta_{E_n})\setminus \pi_{n}^{-1}(\Delta_{\{1_G\}}).
\end{equation} 
\par 
Observe that closed points of $\Phi_n^{-1}(\Delta_{E_n})$ are 
 pairs $(u,v)\in  A^{E_n M}\times A^{E_n M}$ such that $\tau_{E_n}^+(u)=\tau_{E_n}^+(v)$. 
Similarly, the set of closed points of 
$\pi_{n}^{-1}(\Delta_{1_G})$ consists of pairs  
$(u,v)\in  A^{E_n M}\times A^{E_n M}$ such that $u(1_G)= v(1_G)$. 
\par 
We claim that there exists $n \in \N$ such that $V_n = \varnothing$. Indeed, suppose on the contrary that $V_n \neq \varnothing$ for every $n \in \N$. 
For every $m \geq n \geq 0$, 
we have a canonical projection 
\[
p_{m,n} \colon X^{E_m M} \times X^{E_m M} \to X^{E_n M}\times  X^{E_n M}
\] which is clearly a $k$-morphism of algebraic varieties such that 
$p_{m,n}(V_m)\subset V_n$. 
\par 
Consequently, we obtain an inverse system 
$(V_n,p_{m,n})_{m \geq n \geq 0}$ of nonempty constructible subsets of $k$-algebraic varieties with algebraic transition morphisms. 
Since the base field $k$ is uncountable, we infer from  \cite[Lemma~B.2]{cscp-alg-ca} or \cite[Lemma~3.2]{cscp-invariant-ca-alg} that $\varprojlim V_n \neq \varnothing$. Note that since  
\begin{equation}
    \varprojlim_n V_n \subset \varprojlim_n A^{E_n M} \times A^{E_n M} = A^G \times A^G, 
\end{equation}
we can find $x,y \in A^G \times A^G$ such that $\tau(x)= \tau(y)$ and $x(1_G) \neq y(1_G)$ by the description of the sets $V_n$. Therefore, $x \neq y$ and $\tau$ cannot be injective. 
\par 
This contradiction proves the claim that $V_n= \varnothing$ for some $n \geq 0$ that we fix in what follows. 
Since $V_n = \varnothing$, we deduce that $W\coloneqq \Phi_n^{-1}(\Delta_{E_n})$ is a closed subvariety of $U \coloneqq  \pi_{n}^{-1}(\Delta_{1_G})$. 
\par 
By Lemma~\ref{l:model-finite-data}, there exists a finitely generated $\Z$-algebra $R \subset k$ and an  $R$-scheme of finite type $X_R$ and a morphism of $R$-schemes: 
\[
 \mu_R \colon (X_R)^M \to X_R 
\] 
such that $X= X_R \otimes_R k$ and  $\mu= \mu_R \otimes_R k$ and satisfy   the following properties. 
\par 
Denote by $T_R \colon (X_R)^{E_n M } \to (X_R)^{E_n}$ the morphism of $R$-schemes defined by the universal property of fibered products by the collection of $R$-morphisms  $(T_{g})_{g \in E_n}$ where the $R$-morphism $T_{E,g} \colon (X_R)^{E_n M} \to (X_R)^{\{g\}}$, for $g \in E_n$, is the composition of the projection $(X_R)^{E_nM} \to (X_R)^{g M}$ followed by the $R$-morphism $(X_R)^{gM} \to (X_R)^{\{g\}}$ induced by $\mu$ via the isomorphism $(X_R)^{gM} \simeq (X_R)^M$ and $(X_R)^{\{G\}}\simeq (X_R)^{\{1_G\}}$ given by the reindexing bijection $M \to gM$, $h \mapsto gh$. In particular, we find that 
\[
\tau_{E_n}^+ = T_R \otimes_R k. 
\]
\par 
Moreover, if we denote by $\pi_R$  the canonical projection morphism of $R$-schemes $ (X_R)^{E_n M}\times_R  (X_R)^{E_n M}\to (X_R)^{\{1_G\}} \times_R  (X_R)^{\{1_G\}}$ 
and define the $R$-morphism 
\begin{equation}
\label{e:diagonal-reversible}
\Phi_R \coloneqq T_R \times T_R \colon  (X_R)^{E_n M}\times_R  (X_R)^{E_n M}\to  (X_R)^{E_n}\times_R (X_R)^{E_n}, 
\end{equation}
then we can choose $R$ such that $W_R= \Phi_R^{-1}(\Delta_{E_n, R})$ is a closed $R$-subscheme of $U_R= \pi_R^{-1}(\Delta_{\{1_G\}, R})$. 
Note that $W=W_R \otimes_R k$ and  $U=U_R \otimes_R k$. 
\par 
Now, the exact same proof of  \cite[Theorem~4.2]{phung-weakly} shows that $\tau$ is surjective. The only obvious modification needed in the proof of \cite[Theorem~4.2]{phung-weakly} is that the finite set $H_{p,s,d}$ is no longer a group and thus $\tau_{p,s,d}$ is no longer a group cellular automaton. But as  $\tau_{p,s,d}$ is injective, it is also surjective since $G$ is surjunctive by hypothesis. Apart from this remark, the rest of the proof of the surjectivity of $\tau$ is identical. 
\par 
To conclude, it suffices to apply \cite[Lemma~4.3]{phung-weakly} to see that there exists an arbitrary large  finite subset $N \subset G$ and a map $\eta \colon A^N \to A$  such that for all $x \in A^{N M}$, we have $\eta ( \tau_N^+(x))=x(1_G)$. In particular, we deduce from Lemma~\ref{l:direct-local} that $\tau$ is reversible whose inverse $\tau^{-1}\colon A^G \to A^G$ admits $\eta$ as a local defining map. 
\par 
For the general case where $G$ is not necessarily countable, let $H$ be the subgroup of $G$ generated by the finite set $M$. Let $\tau_H \colon A^H \to A^H$ be the restriction cellular automaton which also admits $\mu \colon X^M \to X$ as a local defining map. Then $\tau_H$ is also injective by Lemma~\ref{l:induction-restriction}. Therefore, we have seen in the above that $\tau_H$ must be surjective as $H$ is finitely generated. But Lemma~\ref{l:induction-restriction}  implies that $\tau$ is surjective as well. The proof is thus complete.  
\end{proof}

\section{Cellular automata over pointed object alphabets in concrete categories}  \label{s:ca-concrete-category}

\subsection{Hopfian and co-Hopfian objects} 

Let us fix a category $C$. In general, we say that an object $A$ is a \emph{Hopfian object} if every epimorphism of $A \to A$ is automatically an automorphism. 
Similarly, an object $A$ in $\CC$ 
is \emph{co-Hopfian} if every monomorphism from $A\to A$ is  an automorphism. 
\begin{examples} 
\label{ex:hopfian}
We have the following general examples of Hopfian and co-Hopfian objects (see also \cite{varadarajan}):  
\begin{enumerate}
    \item 
Every Noetherian module is Hopfian, and every Artinian module is co-Hopfian as a module.
\item 
Every one sided Noetherian or Artinian ring is a Hopfian ring. 
\item
Examples of Hopfian groups include  finite groups, and more generally polycyclic-by-finite groups, finitely-generated free groups, 
finitely generated residually finite groups, and torsion-free word-hyperbolic groups. 
\item 
Observe that the additive group $(\Q,+)$ of rational numbers is Hopfian while the additive group of real numbers is not.
\item 
Co-Hopfian groups admit more geometric examples: the mapping class group of a closed hyperbolic surface and the fundamental group of a closed aspherical manifold with nonzero Euler characteristic are co-Hopfian. 
\end{enumerate}
\end{examples} 
\par 

\begin{definition}
We say that an object $A\in \CC$ is \emph{finite product Hopfian}, resp. \emph{finite product co-Hopfian}, if for every $n \in \N$, the object $A^n \in \CC$ is  Hopfian, resp. co-Hopfian. 
\end{definition}

As an example, we have the following class of finite product Hopfian and finite product co-Hopfian objects. 

\begin{proposition}
\label{p:finite-product-commutative-ring-hopfian}
Let $R$ be a commutative ring. Then every finitely generated $R$-module $M$ is finite product co-Hopfian as an $R$-module. 
\end{proposition}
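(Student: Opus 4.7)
The plan is to show that for each $n\geq 1$, every injective $R$-module endomorphism $\varphi\colon M^n\to M^n$ is surjective, hence an automorphism. Since $M$ is finitely generated over $R$, so is $M^n$; fix generators $e_1,\dots,e_r$ of $M^n$.

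The first step is to invoke the Cayley--Hamilton identity for endomorphisms of finitely generated modules over a commutative ring (the ``determinant trick''). Writing $\varphi(e_j)=\sum_i r_{ij}e_i$ for some $r_{ij}\in R$ and taking the characteristic polynomial of the matrix $(r_{ij})$, one obtains a monic relation
\[
\varphi^d+a_{d-1}\varphi^{d-1}+\cdots+a_1\varphi+a_0\,\mathrm{id}_{M^n}=0 \quad \text{in } \End_R(M^n),
\]
with $a_i\in R$. Rearranging yields $\varphi\circ\psi=-a_0\,\mathrm{id}_{M^n}$, where $\psi:=\varphi^{d-1}+a_{d-1}\varphi^{d-2}+\cdots+a_1\,\mathrm{id}_{M^n}$; in particular, $a_0\cdot M^n\subseteq \varphi(M^n)$.

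The second step is to localize: surjectivity of $\varphi$ is checkable prime by prime on $\Spec(R)$, so it suffices to prove that each $\varphi_{\mathfrak{p}}\colon M_{\mathfrak{p}}^n\to M_{\mathfrak{p}}^n$ is surjective over the local ring $R_{\mathfrak{p}}$. Here the injectivity of $\varphi$ plays the key role: if $x\in M_{\mathfrak{p}}^n$ satisfies $a_0 x=0$, then $\varphi(\psi(x))=0$ forces $\psi(x)=0$ by injectivity, producing a strictly lower-degree monic relation valid on the $a_0$-annihilator submodule. Iterating this descent on successive annihilator submodules, one reaches a polynomial relation whose constant term acts as a unit on the relevant quotient; the rearranged identity then furnishes a right inverse of $\varphi_{\mathfrak{p}}$, giving local surjectivity. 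Nakayama's lemma, applied to the finitely generated $R_{\mathfrak{p}}$-module $M_{\mathfrak{p}}^n$, controls the reduction modulo the maximal ideal throughout.

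The main obstacle is exactly this descent step: converting the containment $a_0 M^n\subseteq\varphi(M^n)$ into genuine surjectivity when $a_0$ is a non-unit of $R$. The Cayley--Hamilton relation alone is too weak without the injectivity hypothesis; the argument hinges on iterating the polynomial identity along the filtration of annihilators of $a_0,a_0^2,\ldots$, using injectivity of $\varphi$ at each stage to strip off leading terms, and only then does the standard local-to-global principle close the gap.
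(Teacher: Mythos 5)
Your first step (Cayley--Hamilton giving $\varphi\circ\psi=-a_0\,\Id$, hence $a_0M^n\subseteq\varphi(M^n)$) is fine, but the ``descent step'' you flag as the main obstacle is not an obstacle that can be overcome: it is where the argument must fail, because the statement you are trying to prove is false as literally written. Take $R=M=\mathbb{Z}$, $n=1$, and $\varphi$ multiplication by $2$: this is an injective endomorphism of a finitely generated module over a commutative ring, its characteristic polynomial is $t-2$, so $a_0=-2$, the annihilator of $a_0$ in $M$ is zero, your filtration of annihilators is trivial, and no iteration ever produces a relation whose constant term acts as a unit. Indeed $\varphi$ is not surjective, so $\mathbb{Z}$ is not co-Hopfian over itself. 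Injectivity of $\varphi$ does not interact with the Cayley--Hamilton identity in the way your sketch requires, and no local-to-global argument can repair this.

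The paper's own ``proof'' is a citation to Strooker and Vasconcelos, and the content of those references is the \emph{Hopfian} property: every \emph{surjective} endomorphism of a finitely generated module over a commutative ring is an isomorphism. (The standard argument is the determinant trick you are wielding, but pointed in the other direction: regard $M^n$ as a finitely generated $R[t]$-module with $t$ acting by $\varphi$; surjectivity gives $tM^n=M^n$, so there is $p\in R[t]$ with $(1+t\,p(t))M^n=0$, and $-p(\varphi)$ is a two-sided inverse of $\varphi$.) So the proposition is evidently misstated --- ``co-Hopfian'' should read ``Hopfian'' in both the statement and the paper's proof --- and your Cayley--Hamilton approach is exactly the right tool for the corrected statement, just applied to the wrong class of endomorphisms. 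The downstream application is unaffected, since Theorem~\ref{t:genralization-Hopfian}.(ii) accepts alphabets that are finite product Hopfian \emph{or} finite product co-Hopfian; but as written, neither your argument nor any other can establish the co-Hopfian claim.
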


\begin{proof}
Since the ring $R$ is commutative by hypothesis, we infer from the results of \cite{strooker} and  \cite{vasconcelos} that every finitely generated $R$-module is co-Hopfian. On the other hand, observe that $M^n$ is clearly a finitely generated $R$-module for every $n \in \N$ as $M$ is finitely generated. Therefore, $M^n$ is co-Hopfian as an $R$-module. We can thus conclude that $M$ is a finite product co-Hopfian  $R$-module. The proof is complete. 
\end{proof}

\subsection{Pointed objects and concrete categories}  
We say that a category $\CC$ is a \emph{concrete category} if $\CC$ admits a faithful functor $F \colon \CC \to \Set$ from $\CC$ to the category of sets.  \par 
Intuitively, we can think of  the functor $F$ as the forgetful functor which associates with every object $A$ of $\CC$ the underlying set $F(A)$, and similarly with  every morphism $f \colon A \to B$ in $\CC$ the underlying mapping on sets  $F(f) \colon F(A) \to F(B)$. Note that since $F$ is faithful, every morphism $f$ in $\CC$ is completely determined by the underlying set map $F(f)$. 
\par 
We will reserve the terminology \emph{morphism} for morphisms in a given category while we simply say \emph{maps} or \emph{set maps} the functions of sets. When the context is clear, we use the same notations $A, B$ and $f \colon A \to B$ to refer to the underlying sets $F(A), F(B)$ and the set map $F(f)$. 
\par 
Now suppose that $\CC$ is a category with a \emph{terminal object} that we denote by $\varepsilon_\CC$. An object $A \in \CC$ is called a \emph{pointed object} equipped with a morphism $a \colon \varepsilon_\CC \to A$. We define a morphism of pointed objects $(A, a) \to (B,b)$ to be a morphism $f \colon A \to B$ such that $ f \circ a=b$.

\subsection{The class $CA_{\CC}(G,(A,a))$}  
Let us fix a group $G$ and a concrete category $\CC$ with finite fibered products and a distinguished  terminal object $\varepsilon$. 
\par 
Hence, by the universal property of fibered products, we find that for all finite sets $E \subset F$ and every pointed object $(A,a) \in \CC$, the canonical projections $\pi \colon A^F \to A^E$, $x \mapsto x\vert_E$ is a pointed morphism in $\CC$.
\par 
Indeed, observe first that $A^E$  is canonically a pointed object with the pointed morphism $a^E \colon \varepsilon \to A^E$ given by the universal property of fibered products: the component morphisms are $a \colon \varepsilon \to A=A^{\{g\}}$ for $g \in E$. Similarly, we have a pointed object $(A^F, a^F)$ and the canonical projection $\pi\colon A^F \to A^E$ which verifies tautologically $\pi \circ a^F = a^E$. 
\par 
We introduce the following class of cellular automata over a pointed object in a concrete category with fibered products and a terminal object. 
\begin{definition}
\label{d:ca-concrete} 
Let $G$ be a group and let $\CC$ be a concrete category with fibered products and a distinguished terminal object.
For every pointed object $(A,a) \in \CC$, the class $CA_{\CC}(G,A)$ of \emph{$\CC$-cellular automata} consists of cellular automata $\tau \colon A^G \to A^G$ admitting for some finite memory set $M \subset G$ a local defining set map $\mu \colon A^M \to A$  which is the underlying map of a pointed morphism $(A^M,a^M) \to (A,a)$. 
\end{definition} 
\par 
A related notion of cellular automata over concrete categories were investigated in \cite{csc-concrete} where the authors obtain as the main result a reversibility and invertibility theorem  for cellular automata over concrete categories over residually finite group universes. In Section~\ref{s:generalization}, we will formulate and prove a more general result over LEF-group universes (Theorem~\ref{t:genralization-Hopfian}).  
\par 
We have the following simple lemma which results from the universal property of fibered products. 
\begin{lemma}
\label{l:induced-morphism-tau}
Let $G$ be a group and let $\CC$ be a concrete category  with a terminal object. Let $(A,a)\in \CC$ be a pointed object and let $\tau \in CA_{\CC}(G,(A,a))$ with a given memory set $M \subset G$. Then for every finite subset $E \subset G$, the induced set map $\tau_E^+ \colon A^{EM} \to A^E$ (Section~\ref{s:induced-map}) is a pointed morphism in $\CC$.  
\end{lemma}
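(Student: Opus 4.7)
The plan is to construct $\tau_E^+$ explicitly as a composition and tuple of morphisms in $\CC$, and then use the universal property of fibered products together with faithfulness of the forgetful functor to identify it with the set-theoretic induced map from Section~\ref{s:induced-map} and to verify the pointed compatibility.

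First, for each $g \in E$, since $gM \subset EM$, the canonical projection $\pi_g \colon A^{EM} \to A^{gM}$ is a (pointed) morphism in $\CC$ by the universal property of fibered products, as noted in the paragraph preceding Definition~\ref{d:ca-concrete}. The reindexing bijection $M \to gM$, $h \mapsto gh$, induces an isomorphism $\rho_g \colon A^{gM} \to A^M$ in $\CC$ (again via the universal property, since it simply permutes the coordinate labels of a fibered product of copies of the same object $A$). Composing with the pointed morphism $\mu \colon A^M \to A$ given by the hypothesis $\tau \in CA_{\CC}(G,(A,a))$, I obtain a pointed morphism
\[
\mu_g \coloneqq \mu \circ \rho_g \circ \pi_g \colon A^{EM} \to A
\]
in $\CC$ for each $g \in E$. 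By the universal property of the fibered product $A^E$, the family $(\mu_g)_{g \in E}$ assembles into a unique morphism $T_E \colon A^{EM} \to A^E$ in $\CC$ whose composition with the $g$-th canonical projection $A^E \to A$ equals $\mu_g$.

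Next, I identify $T_E$ with $\tau_E^+$ at the level of underlying sets. Because the forgetful functor $F \colon \CC \to \Set$ is faithful and the notation $A^F$ throughout the paper identifies the underlying set with the product set (so that canonical projections correspond to coordinate evaluations and $x \in A^{EM}$ is described by its values $x(h)$ for $h \in EM$), it suffices to evaluate $T_E$ coordinate-wise: for $x \in A^{EM}$ and $g \in E$,
\[
T_E(x)(g) = \mu_g(x) = \mu\bigl(\rho_g(\pi_g(x))\bigr) = \mu\bigl((g^{-1}x)\vert_M\bigr),
\]
which is precisely the definition of $\tau_E^+(x)(g)$. Hence $T_E$ and $\tau_E^+$ agree as set maps, so $\tau_E^+$ is the underlying set map of the $\CC$-morphism $T_E$.

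Finally, for the pointed condition, I must show $T_E \circ a^{EM} = a^E$. By the universal property characterizing $a^E$ as the unique morphism into $A^E$ whose $g$-th component is $a$ for every $g \in E$, it suffices to verify that the $g$-th projection of $T_E \circ a^{EM}$ equals $a$ for each $g \in E$. But this projection is exactly $\mu_g \circ a^{EM}$, which is a composition of pointed morphisms ($\pi_g$, $\rho_g$, and $\mu$ are all pointed by construction and hypothesis), hence equals $a$. The main step requiring attention is the identification between the fibered-product projections in $\CC$ and the coordinate evaluations on the underlying sets; but this is already built into the conventions set up before Definition~\ref{d:ca-concrete}, so once the composition $T_E$ is written down, both the set-level agreement and the pointedness are immediate from the universal property.
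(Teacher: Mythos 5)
Your proposal is correct and follows essentially the same route as the paper: both build $\tau_E^+$ from the component morphisms $\mu \circ \rho_g \circ \pi_g$ (projection to $A^{gM}$, reindexing, then $\mu$) and assemble them via the universal property of fibered products, noting that each component is pointed as a composition of pointed morphisms. Your version merely spells out the coordinate-wise identification with the set-theoretic $\tau_E^+$ and the verification of $T_E \circ a^{EM} = a^E$ in more detail than the paper does.
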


\begin{proof}
Let $\varepsilon \in \CC$ be the terminal object. Observe first that $\tau_E^+ \colon A^{EM} \to A^E$ is a morphism  defined by the universal property of fibered products by the collection of morphisms $(T_{g})_{g \in E}$ where the component morphism $T_{g} \colon A^{E M} \to A^{\{g\}}$, $g \in E$, is the composition of the canonical projection $A^{E M} \to A^{gM}$ followed by the  morphism $A^{gM} \to A^{\{g\}}$ induced by $\mu$ via the the reindexing bijection $M \to gM$, $h \mapsto gh$. 
\par 
To conclude, it suffices to note that $T_g$ is a pointed morphism as the composition of the pointed morphisms $A^{E M} \to A^{gM}$, $x \mapsto x\vert_{gM}$ and $\mu$. It follows that $\tau_E^+$ is indeed a pointed morphism and the proof is complete. 
\end{proof}

\section{Generalizations} 
\label{s:generalization}

We establish the following general direct finiteness property and invertibility  property for cellular automata with finite product Hopfian or finite product co-Hopfian alphabets in a concrete category. 

\begin{theorem}
\label{t:genralization-Hopfian}
Let $G$ be a LEF-group and let $\CC$ be a concrete category with a terminal object and with fibered products. Let $(A,a) \in \CC$ be a pointed object. 
Let $\tau, \sigma  \in~CA_{\CC}(G,(A,a))$ then the following hold: 
\begin{enumerate}[\rm (i)] 
\item if $\tau$ 
is reversible and if the alphabet $A$ is finite product Hopfian then one has  $\tau^{-1} \in CA_{\CC}(G,(A,a))$; 
\item 
if $\sigma \circ \tau = \Id$ and if $A$ is finite product Hopfian or finite product co-Hopfian then one also has $\tau \circ \sigma = \Id$. 
\end{enumerate} 
\end{theorem}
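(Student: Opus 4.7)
The plan is to mirror the strategy of the proof of Theorem~\ref{t:main-intro-invertible}, with the finite product Hopfian (or co-Hopfian) hypothesis replacing the role of Nowak's theorem. Fix a common symmetric finite memory $M \ni 1_G$ for all cellular automata involved, and let $\mu \colon A^M \to A$ be the pointed local defining morphism of $\tau$ in $\CC$. In (i), let $\eta \colon A^M \to A$ denote the (a priori only set-theoretic) local defining map of $\tau^{-1}$; in (ii), let $\nu \colon A^M \to A$ be the pointed local defining morphism of $\sigma$. By Lemma~\ref{l:direct-local}, reversibility gives both $\eta \circ \tau_M^+ = \pi$ and $\mu \circ (\tau^{-1})_M^+ = \pi$ in (i), while $\sigma \circ \tau = \Id$ gives $\nu \circ \tau_M^+ = \pi$ in (ii).

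Now apply the LEF property to $M^2$: one obtains a finite group $F$ and an embedding $\varphi \colon M^2 \to F$ with $F = \langle E \rangle$, $E = \varphi(M) \ni 1_F$, and $E^2 = \varphi(M^2)$. The bijection $M \to E$ induces a pointed isomorphism $\delta_E \colon A^M \to A^E$ in $\CC$. Define the pointed morphism $f = \mu \circ \delta_E^{-1}$ and, in (ii), $g = \nu \circ \delta_E^{-1}$; by the universal property of fibered products, these induce $F$-equivariant pointed $\CC$-cellular automata $\alpha$ and $\beta$ on $A^F$. In (i), $\eta$ analogously yields only an $F$-equivariant set map $\beta \colon A^F \to A^F$. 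The verbatim computation from the proof of Theorem~\ref{t:main-intro-invertible}, using that $\varphi$ embeds $M^2$ as $E^2$ inside $F$, then gives $\beta \circ \alpha = \Id_{A^F}$ (in $\CC$ for (ii); on underlying sets for (i), where the second identity also gives $\alpha \circ \beta = \Id$).

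For (ii), the identity $\beta \circ \alpha = \Id$ in $\CC$ makes $\alpha$ a split monomorphism and $\beta$ a split epimorphism. If $A$ is finite product Hopfian, $A^F$ is Hopfian and the epi $\beta$ must be an automorphism; if $A$ is finite product co-Hopfian, $A^F$ is co-Hopfian and the mono $\alpha$ is an automorphism. In either case $\alpha$ and $\beta$ are mutually inverse isomorphisms in $\CC$, so $\alpha \circ \beta = \Id_{A^F}$. Translating this identity back through $\varphi$ and invoking Lemma~\ref{l:direct-local}(ii) yields $\mu \circ \sigma_M^+ = \pi$ on $A^{M^2}$, and hence $\tau \circ \sigma = \Id$.

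For (i), the two set-level identities make the underlying set map of $\alpha$ a bijection; by faithfulness of the forgetful functor, $\alpha$ is then simultaneously monic and epic in $\CC$, and the finite product Hopfian hypothesis on $A$ forces $\alpha$ to be an automorphism in $\CC$. Mimicking the end of the proof of Theorem~\ref{t:main-intro-invertible}, build a pointed morphism $\iota \colon A^E \to A^F$ in $\CC$ whose $h$-component is the canonical projection for $h \in E$ and is the composition $A^E \to \varepsilon_\CC \xrightarrow{a} A^{\{h\}}$ for $h \in F \setminus E$ (this is where the pointed structure $a$ takes the place of the fixed point $c \in X(k)^{F \setminus E}$ used in Theorem~\ref{t:main-intro-invertible}). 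With $\omega \colon A^F \to A^{\{1_F\}} = A$ the canonical projection, the pointed morphism $\omega \circ \alpha^{-1} \circ \iota \circ \delta_E \colon A^M \to A$ has underlying set map equal to $\eta$ by the same computation as in Theorem~\ref{t:main-intro-invertible}, so $\tau^{-1} \in CA_{\CC}(G,(A,a))$. The main delicate point is that in a general concrete category the set-theoretic inverse of a bijective morphism need not itself be a morphism, which is precisely why the Hopfian or co-Hopfian condition on the finite power $A^F$ is needed; the remaining work is the routine bookkeeping of the LEF reindexing, which is identical to that in Theorem~\ref{t:main-intro-invertible}.
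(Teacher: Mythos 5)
Your proposal is correct and follows essentially the same route as the paper: transfer the identity $\eta\circ\tau_M^+=\pi$ through the LEF embedding $\varphi\colon M^2\to F$ to get maps $\alpha,\beta$ on $A^F$ with $\beta\circ\alpha=\Id$, upgrade $\alpha$ (or $\beta$) to an automorphism via the finite product (co-)Hopfian hypothesis, and pull the resulting morphism back to a pointed local defining morphism for $\tau^{-1}$ using the point $a$ to fill the coordinates outside $E$. One small but genuine improvement over the paper's write-up: in part (i) the paper applies the Hopfian hypothesis directly to the \emph{monomorphism} $\alpha$, which does not match its own definition (Hopfian concerns epimorphisms); your use of the second reversibility identity $\mu\circ(\tau^{-1})_M^+=\pi$ to make the underlying map of $\alpha$ surjective, hence $\alpha$ an epimorphism by faithfulness of the forgetful functor, is the correct way to invoke the stated finite product Hopfian hypothesis.
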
 

\begin{proof}[Proof of Theorem~\ref{t:genralization-Hopfian}.(i)] 
Suppose first that $A$ is a finite product Hopfian pointed object and that  $\tau$ is a reversible cellular automaton. Then the inverse $\tau^{-1}\colon A^G \to A^G$ is also a cellular automaton. 
\par 
It follows that we can choose a finite symmetric set $M \subset G$ containing $1_G$ such that $M$ is a memory set of both $\tau$ and $\tau^{-1}$. 
Let $\mu \colon A^M \to A$ be the local defining pointed morphism of $\tau$. Similarly, we have a local defining set map $\eta \colon A^M \to A$ of $\tau^{-1}$. Our goal is to prove that $\eta$ is a pointed morphism in the category $\CC$.   
\par 
By Lemma~\ref{l:induced-morphism-tau}, we can define a pointed  morphism $\tau_M^+ \colon A^{M^2} \to A^M$ in $\CC$  given by $\tau_M^+(z)(g)= \mu((g^{-1}z)\vert_M)$ for all $z \in A^M$ and $g \in M$. Let $\pi\colon  A^{M^2} \to A^{\{1_G\}}$ be the canonical  projection $v \mapsto v(1_G)$.  Since $\tau^{-1} \circ \tau=\Id$, we infer from Lemma~\ref{l:direct-local} that $\eta\circ \tau_M^+=\pi$ as set maps. 
\par 
Now, since $G$ is a LEF-group, we can find a finite group $F$ and an embedding  $\varphi \colon M^2 \to F$ such that $\varphi(ab)= \varphi(a) \varphi(b) $ for all $a,b \in M$. 
\par 
Moreover, up to  restricting to the subgroup of $F$ generated by the images $\varphi(a)$ for $a \in M$, we can clearly suppose that $F$ is generated by the finite set $E=\{\varphi(a) \colon a \in M\}=\varphi(M)$. 
\par 
Observe that since $M$ is symmetric and contains $1_G$ and since $\varphi$ is an embedding, we find that $E$ is also a finite symmetric subset of $F$ and that $1_F=\varphi(1_G) \in E$. 
\par 
Consider the pointed morphism $f \colon A^E \to A$ defined by $f(x)= \mu(y)$ for every $x \in A^E$ and $y\in A^M$ where $y(g)= x(\varphi(g))$ for $g \in M$. 
The morphism $f$ in turn induces by the universal property of fibered products  an $F$-equivariant pointed morphism
\[
\alpha \colon A^F \to A^F
\] 
defined by 
$\alpha(x)(h)= f((h^{-1}x)\vert_E)$ for all $x \in A^F$ and $h \in F$. 
\par 
Similarly, the map $\eta$ induces a set map $\psi \colon A^E \to A$ via the reindexing  bijection $M \to E$ given by $p \mapsto \varphi(p)$  and determines another $F$-equivariant set map 
\[
\beta \colon A^F \to A^F
\] 
defined by $\beta(x)(h)= \psi((h^{-1}x)\vert_E)$ for all $x \in A^F$ and $h \in F$. 
\par 
We are going to prove that $\beta \circ \alpha = \Id$. For this,  
consider the set map  $\alpha_E^+ \colon A^{E^2} \to A^E$ given by $\alpha_E^+(x)(h)= f((h^{-1}x)\vert_E)$ for all $x \in X^E$ and $h \in E$. Since $f$ is a pointed morphism in $\CC$, we deduce from the universal property of fibered products that $\alpha_E^+$ is also a pointed morphism in $\CC$. 
\par 
Let $\rho \colon X^{E^2} \to X^{\{1_F\}}$ be the canonical projection $u \mapsto u(1_G)$. 
Since $\varphi$ embeds $M^2$ into $F$ and since $\eta\circ \tau_M^+=\pi$, we have $\psi \circ \alpha_E^+=\rho$. 
\par 
Therefore, we find for every $x \in A^F$ and $h \in F$ that: 
\begin{align*}
    (\beta(\alpha(x)))(h) & = \psi((h^{-1}\alpha(x))\vert_E)\\
    & = \psi((\alpha(h^{-1}x))\vert_E)\\
    &=\psi(\alpha_E^+((h^{-1}x)\vert_{E^2}))\\ 
    & = (h^{-1}x)(1_F)\\
    &= x(h).
\end{align*}
\par 
Thus, we can conclude that $\beta \circ \alpha= \Id$ as a set map. It follows in particular that $\alpha\colon A^F \to A^F$ is a monomorphism in the category $\CC$. Indeed, suppose that $\delta_1, \delta_2 \colon B \to A^F$ are morphisms in $\CC$ such that $\alpha \circ \delta_1= \alpha \circ \delta_2$. Then since $\beta \circ \alpha= \Id$, we deduce the following equalities of set maps: 
\begin{align*}
    \delta_1 = \Id \circ \delta_1 & = (\beta \circ \alpha) \circ \delta_1  
     = \beta \circ (\alpha \circ \delta_1) \\ 
     & = \beta \circ (\alpha \circ \delta_2) = (\beta \circ \alpha) \circ \delta_2= \Id \circ \delta_2 = \delta_2. 
\end{align*}
\par 
Consequently, we deduce that  $\delta_1=\delta_2$ as morphisms in $\CC$. Since $A$ is a finite product Hopfian object, the monomorphism $\alpha$ must be an automorphism. Therefore, we find that $\beta\colon A^F \to A^F$ is in fact a morphism in $\CC$. 
\par 
Let $\varepsilon \in \CC$ be the terminal object then $a \colon \varepsilon \to A$ is a pointed  morphism. Since $\varepsilon$ is the terminal object in $\CC$, we have a pointed morphism $\omega \colon A^M \to \varepsilon$. We can thus define by the universal property of fibered products a pointed morphism $\iota \colon A^M \to A^F=A^M \times A^{F \setminus M}$ given by the product of the identity morphism $\Id \colon A^M \to A^M$ and the composition $A^M \xrightarrow{\omega}  \varepsilon \xrightarrow{a^{F \setminus M}} A^{F \setminus M}$. 
\par 
We now consider the pointed  morphism  $\nu \colon A^M \to A$ given by the universal property of fibered products by the formula 
$\nu (x)=(\beta \circ \iota (y))(1_F)$ for all $x \in A^M$ and  $y\in A^E$. 
\par 
By construction, it is clear that $\eta= \nu$ as set maps and thus $\eta$ is a pointed morphism in $\CC$. 
We can finally conclude that $\tau^{-1} \in CA_{\CC}(G,(A,a))$. 
Therefore, the proof of   Theorem~\ref{t:genralization-Hopfian}.(i) is complete. 
\end{proof}

With a similar proof, we can complete the proof of Theorem~\ref{t:genralization-Hopfian} as follows. 

\par 
\begin{proof}[Proof of Theorem~\ref{t:genralization-Hopfian}.(ii)] 
Suppose now that $\sigma \circ \tau = \Id$ and that $A$ is finite product Hopfian or finite product co-Hopfian. \par 
We choose a finite symmetric set $M \subset G$ containing $1_G$ such that $M$ is a memory set of both $\tau$ and $\sigma$. Let $\mu, \eta \colon A^M \to A$ be respectively the pointed morphisms which induce the local defining maps of $\tau$ and $\sigma$. 
\par 
We consider also the pointed morphism $\tau_M^+, \sigma_M^+ \colon A^{M^2} \to A^M$ in $\CC$ by Lemma~\ref{l:induced-morphism-tau}. Denote by $\pi\colon  A^{M^2} \to A^{\{1_G\}}$ the canonical projection. Then since $\sigma \circ \tau=\Id$, we deduce from Lemma~\ref{l:direct-local} that $\eta\circ \tau_M^+=\pi$ as set maps. 
\par 
From this point, we can follow the exact same proof of Theorem~\ref{t:genralization-Hopfian}.(ii) to obtain pointed morphisms $\alpha, \beta \colon A^F \to A^F$ such that $\beta \circ \alpha=\Id$ where we keep the same notations and constructions. 
\par 
In particular, we find that $\alpha$ is a monomorphism as in the above proof. Observe also that $\beta$ is an epimorphism. Indeed, let  
$\gamma_1, \gamma_2 \colon A^F \to B$ be  morphisms in $\CC$ such that $\gamma_1 \circ \beta  = \gamma_2 \circ  \beta$. As $\beta \circ \alpha= \Id$, we find that: 
\begin{align*}
    \gamma_1 = \gamma_1 \circ \Id & =
    \gamma_1 \circ (\beta \circ \alpha)  
     =  (\gamma_1 \circ \beta) \circ \alpha  \\ 
     & =  (\gamma_2 \circ \beta) \circ \alpha  = 
      \gamma_2 \circ (\beta \circ \alpha) = \gamma_2 \circ \Id = \gamma_2. 
\end{align*}
\par 
Hence, $\beta$ is indeed an epimorphism. We claim that $\alpha$ and $\beta$ are automorphisms. Indeed, if $A$ is a finite product Hopfian object, then the monomorphism $\alpha$ must be an automorphism. Otherwise, $A$ is a finite product Hopfian object so  the epimorphism $\beta$ must be an automorphism. Therefore, we have proven in all cases that $\alpha$ and $\beta$ are automorphisms. 
\par 
Consequently, we have $\alpha \circ \beta = \Id$. We claim that $\mu \circ \sigma_{M}^+= \pi$. Since $\varphi$ embeds $M^2$ into $F$, it suffices to prove that $f \circ \beta_E^+= \rho$ where we recall that $\rho \colon X^{E^2} \to X^{\{1_F\}}$ is the canonical projection $u \mapsto u(1_G)$. 
\par 
Let $y \in A^{F}$ and let $x= y \vert_{E^2}$. Then we deduce from the equality $\alpha \circ \beta= \Id$ and from the constructions that: 
\begin{align*} 
f ( \beta_E^+(x)) = f(\beta(y)\vert_E) = \alpha(\beta(y))(1_G)=y(1_G)= \rho(x). 
\end{align*}
\par 
This proves the claim that $\mu \circ \sigma_{M}^+= \pi$ as set maps. Hence, Lemma~\ref{l:direct-local} implies that 
$\tau \circ \sigma=\Id$ and the proof is complete. 
\end{proof}

\section{Applications and examples} 
\label{s:applications}

In this section, we will give several explicit examples to illustrate our main results obtained in this paper. 

\subsection{$R$-module cellular automata}
Let $G$ be a group and let $A$ be a module over a ring $R$. Then the class $CA_{R-mod}(G,A)$ of \emph{$R$-module  cellular automata} consists of cellular automata $\tau \colon A^G \to A^G$ admitting 
a local defining map $\mu \colon A^M \to A$ which is a homomorphism of $R$-modules for some finite set $M \subset G$. 
\par 
Note that the category of $R$-modules is a concrete category with the trivial $R$-module $\{0_R\}$ as the terminal object. The classes of linear cellular automata and group cellular automata (cf.~Section~\ref{s:group-ca}) are important examples of $R$-module cellular automata (see, e.g.  \cite{ca-and-groups-springer},  \cite{phung-dcds},  \cite{phung-israel}, \cite{phung-post-surjective}). 
\par 
As a direct consequence of Theorem~\ref{t:genralization-Hopfian}, we obtain the following invertibility and direct finiteness result for the class $CA_{{R-mod}}$:
\begin{corollary}
\label{c:app-invertible-module} 
Let $G$ be a LEF-group and let $R$ be a  ring. Let $A$ be an $R$-module and let $\sigma, \tau \in CA_{R-mod}(G,A)$. If $A$ is a Noetherian $R$-module and if 
$\tau$ is reversible then one has $\tau^{-1} \in CA_{R-mod}(G,A)$. Moreover, if  $\sigma \circ \tau= \Id$ then one has $\tau \circ \sigma= \Id$ in each of the following cases: 
\begin{enumerate}[\rm (a)] 
\item 
$A$ is a Noetherian $R$-module or an Artinian $R$-module; 
\item 
$R$ is a commutative ring and $A$ is a finitely generated $R$-module.   
\end{enumerate} 
\end{corollary}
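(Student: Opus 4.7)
The plan is to deduce this corollary directly from Theorem~\ref{t:genralization-Hopfian} applied to the concrete category $\CC = R\text{-}\Mod$ of left $R$-modules, so the whole task reduces to checking that this category fits the framework and that $A$ satisfies the appropriate finite product Hopfian or co-Hopfian hypothesis in each case. First I would verify the categorical setup: $R\text{-}\Mod$ has a faithful forgetful functor to $\Set$, admits a terminal (in fact zero) object given by the trivial module $\{0_R\}$, and has all finite fibered products (since it has finite products and a zero object). The canonical morphism $0\colon \{0_R\}\to A$ makes $A$ into a pointed object of $\CC$, and the observation that an $R$-module homomorphism $A^M\to A$ is automatically a pointed morphism (it sends $0$ to $0$) gives the identification $CA_{R\text{-}\mathrm{mod}}(G,A) = CA_{\CC}(G,(A,0))$. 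Hence every $\tau,\sigma$ in the statement lies in $CA_{\CC}(G,(A,0))$ in the sense of Definition~\ref{d:ca-concrete}.

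Next, I would verify the Hopfian/co-Hopfian hypotheses case by case. For the invertibility statement and for the Noetherian branch of (a) in the second statement, if $A$ is a Noetherian $R$-module then $A^n$ is Noetherian for every $n\in\N$ as a finite direct sum of Noetherian modules; by Examples~\ref{ex:hopfian}.(1), each $A^n$ is Hopfian, so $A$ is finite product Hopfian. For the Artinian branch of (a), the same closure argument shows that each $A^n$ is Artinian and hence co-Hopfian, so $A$ is finite product co-Hopfian. For (b), Proposition~\ref{p:finite-product-commutative-ring-hopfian} directly states that any finitely generated module over a commutative ring is finite product co-Hopfian.

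Finally, I would invoke Theorem~\ref{t:genralization-Hopfian}.(i) in the Noetherian reversible case to conclude $\tau^{-1}\in CA_{R\text{-}\mathrm{mod}}(G,A)$, and Theorem~\ref{t:genralization-Hopfian}.(ii) in each of the three sub-cases of the second statement to conclude $\tau\circ\sigma=\Id$. There is no genuine obstacle, since all the real work has already been done in Theorem~\ref{t:genralization-Hopfian}; the only thing requiring care is the opening verification that the module-theoretic framework genuinely instantiates the abstract categorical setting, which as outlined above is essentially formal.
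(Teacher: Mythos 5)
Your proposal is correct and follows essentially the same route as the paper: reduce to Theorem~\ref{t:genralization-Hopfian} by checking that Noetherian (resp. Artinian, resp. finitely generated over a commutative ring) modules are finite product Hopfian (resp. co-Hopfian), the latter via Proposition~\ref{p:finite-product-commutative-ring-hopfian}. Your explicit verification that $R$-modules form a concrete category with zero object and finite fibered products, and that module homomorphisms are automatically pointed, is a point the paper only states in passing, but it is the same argument.
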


\begin{proof}
For the first statement, suppose that $A$ is a Noetherian $R$-module. Then $A^E$ is also a Noetherian $R$-module for every finite set $E$. Hence, $A$ is a finite product Hopfian $R$-module. Therefore, if $\tau$ is reversible, we deduce from Theorem~\ref{t:genralization-Hopfian}.(i) that $\tau^{-1} \in CA_{R-mod}(G,A)$. 
\par 
For the second statement, suppose that $R$ is a commutative ring and $A$ is a finitely generated $R$-module. Then we infer from Proposition~\ref{p:finite-product-commutative-ring-hopfian} that $A$ is a finite product co-Hopfian $R$-module. If $A$ is a Noetherian $R$-module then we have seen as above that $A$ is a finite product Hopfian $R$-module. Similarly, note that every Artinian $R$-module is a co-Hopfian $R$-module and direct sums of Artinian $R$-modules are Artinian $R$-modules. Hence, if $A$ is an Artinian $R$-module then it is a finite product co-Hopfian $R$-module.
\par 
We conclude that $A$ is a finite product Hopfian or co-Hopfian $R$-module in all cases (a) and (b). 
Consequently, Theorem~\ref{t:genralization-Hopfian}.(ii) implies that $\tau \circ \sigma= \Id$. The proof is thus complete.  
\end{proof}

\subsection{Group cellular automata} 
\label{s:group-ca}
Let $G$ be a group and let $A$ be a group. Then the class $CA_{grp}(G,A)$ of \emph{group cellular automata} consists of cellular automata $\tau \colon A^G \to A^G$ admitting 
a local defining map $\mu \colon A^M \to A$ which is a group homomorphism for some finite memory set $M \subset G$. 
\par 
In the spirit of the geometric generalization of Kaplansky's direct finiteness conjecture \cite{phung-geometric}, we obtain as another immediate application of Theorem~\ref{t:genralization-Hopfian}  the following direct finiteness result for group cellular automata: 
\begin{corollary}
\label{c:app-invertible-module} 
Let $G$ be a LEF-group and let $A$ be a Hopfian or a co-Hopfian group (see Examples~\ref{ex:hopfian}). Suppose that $\sigma, \tau \in CA_{grp}(G,A)$ satisfy $\sigma \circ \tau= \Id$. Then one also has $\tau \circ \sigma= \Id$. 
\end{corollary}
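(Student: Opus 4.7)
The plan is to apply Theorem~\ref{t:genralization-Hopfian}.(ii) to the concrete category $\CC = \mathrm{Grp}$ of groups. First, I would verify that $\mathrm{Grp}$ satisfies all the hypotheses needed to invoke Theorem~\ref{t:genralization-Hopfian}: the forgetful functor $\mathrm{Grp}\to \mathrm{Set}$ makes $\mathrm{Grp}$ a concrete category, the trivial group $\{1\}$ is a terminal object, and the usual categorical fibered products exist in $\mathrm{Grp}$. Moreover, every group $A$ is canonically a pointed object via the unique morphism $\{1\} \to A$, and every group homomorphism is automatically a morphism of pointed objects. Consequently, a cellular automaton $\tau \in CA_{grp}(G,A)$, whose local defining map $\mu \colon A^M \to A$ is a group homomorphism by definition, is a pointed morphism from $(A^M, 1)$ to $(A,1)$, so $CA_{grp}(G,A) = CA_{\mathrm{Grp}}(G,(A,1))$.

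Next, I would translate the hypothesis into the language of Theorem~\ref{t:genralization-Hopfian}.(ii) by arguing that, for the classes of groups listed in Examples~\ref{ex:hopfian}, being Hopfian (respectively co-Hopfian) is preserved by finite direct products, so $A$ is finite product Hopfian (respectively finite product co-Hopfian) as an object of $\mathrm{Grp}$. This is immediate when $A$ is finite (finite powers are finite), when $A$ is polycyclic-by-finite (the class is closed under extensions and finite direct products, and all such groups are Hopfian), and when $A$ is finitely generated residually finite (the class is stable under finite direct products, and Mal'cev's theorem gives Hopfian-ness). For the geometric co-Hopfian examples, one checks analogously that the relevant product remains co-Hopfian.

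Once these verifications are in place, the conclusion $\tau \circ \sigma = \Id$ follows at once from Theorem~\ref{t:genralization-Hopfian}.(ii) applied to $\sigma, \tau \in CA_{\mathrm{Grp}}(G,(A,1))$.

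The main obstacle is in the intermediate step: for a general group $A$, Hopfian-ness or co-Hopfian-ness is \emph{not} inherited by $A^n$, so some care is required to note that the examples drawn from Examples~\ref{ex:hopfian} are precisely those for which the finite product versions of these properties also hold, which is why the statement of the corollary is formulated with an explicit reference to these examples rather than to arbitrary Hopfian/co-Hopfian groups.
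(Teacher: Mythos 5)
Your proposal follows exactly the route the paper takes: the paper's entire proof of this corollary is the single sentence ``It is a direct application of Theorem~\ref{t:genralization-Hopfian}.'' Your preliminary verifications are correct and worth recording: $\mathrm{Grp}$ is concrete, has the trivial group as terminal object and has all finite (fibered) products; every group is canonically pointed and every homomorphism is automatically pointed, so $CA_{grp}(G,A)=CA_{\mathrm{Grp}}(G,(A,1))$; and since in $\mathrm{Grp}$ the monomorphisms are exactly the injective and the epimorphisms exactly the surjective homomorphisms, the categorical Hopfian/co-Hopfian notions used in Theorem~\ref{t:genralization-Hopfian} coincide with the usual group-theoretic ones.

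The one place where you go beyond the paper --- and rightly so --- is your observation that Theorem~\ref{t:genralization-Hopfian}.(ii) requires $A$ to be \emph{finite product} Hopfian or co-Hopfian, whereas the corollary only assumes $A$ Hopfian or co-Hopfian, and these properties are in general not inherited by $A^n$. The paper's ``direct application'' silently elides this, so your class-by-class repair via Examples~\ref{ex:hopfian} is the right move. Your checks for finite, polycyclic-by-finite, and finitely generated residually finite groups (hence also finite powers of finitely generated free groups) are fine; for $(\Q,+)$ note that group endomorphisms of $\Q^n$ are $\Q$-linear, and for fundamental groups of closed aspherical manifolds with $\chi\neq 0$ one can use that $M^n$ is again closed aspherical with $\chi(M^n)=\chi(M)^n\neq 0$. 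Be aware, however, that ``one checks analogously'' does not dispose of all the listed examples: finite powers of torsion-free word-hyperbolic groups are no longer hyperbolic and their Hopficity is not automatic (residual finiteness of hyperbolic groups is open), and co-Hopficity of powers of mapping class groups is likewise not a one-line check. So if the corollary is read for an \emph{arbitrary} Hopfian or co-Hopfian $A$, a genuine gap remains --- but it is a gap in the paper's own statement and proof, which your write-up at least identifies and partially closes.
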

\begin{proof}
It is a direct application of Theorem~\ref{t:genralization-Hopfian}. 
\end{proof}

\bibliographystyle{siam}

\begin{thebibliography}{10}
 
 

 
  
 
  \bibitem{csc-induction}
{\sc T.~Ceccherini-Silberstein, M.~Coornaert},
{\em Induction and restriction of cellular automata}, 
Ergodic Theory Dynam. Systems 29 (2009), no. 2, pp.~371--380.

\bibitem{ca-and-groups-springer}
\leavevmode\vrule height 2pt depth -1.6pt width 30pt, 
{\em Cellular automata and groups}, 
Springer Monographs in Mathematics, Springer-Verlag, Berlin, 2010. 
  
 
\bibitem{csc-concrete}
\leavevmode\vrule height 2pt depth -1.6pt width 30pt,
{\em Surjunctivity and reversibility of cellular automata over concrete categories}, 
Trends in harmonic analysis, pp.~91--134, Springer INdAM Series, Vol. 3, Springer, 2013.

 

\bibitem{cscp-alg-ca}
{\sc T.~Ceccherini-Silberstein, M.~Coornaert, and X.~K. Phung}, {\em On
  injective cellular automata over schemes}, Comm. Algebra 47 (2019), no.~11, pp.~4824--4852. DOI:
10.1080/00927872.2019.1602872.
 



\bibitem{cscp-invariant-ca-alg}
\leavevmode\vrule height 2pt depth -1.6pt width 30pt,
{\em Invariant sets and nilpotency of endomorphisms of algebraic sofic shifts}, preprint, arXiv:2010.01967




\bibitem{lef-group}
{\sc E.I.~Gordon, A.M.~Vershik}, 
{\em Groups that are locally embeddable in the class of finite groups}, Algebra i Analiz vol. 9,  1997, no. 1, pp.~71--97. 

 
 


\bibitem{gottschalk}
{\sc W.H.~Gottschalk}, 
{\em Some general dynamical notions}, 
Recent advances in topological dynamics, Springer, Berlin, 1973, pp. 120--125. Lecture Notes in Math. Vol. 318.



\bibitem{gromov-esav}
{\sc M.~Gromov}, {\em Endomorphisms of symbolic algebraic varieties}, J. Eur.
  Math. Soc. (JEMS), 1 (1999), pp.~109--197.

\bibitem{grothendieck-ega-1-1}
{\sc A.~Grothendieck}
{\em \'{E}l\'ements de g\'eom\'etrie alg\'ebrique. I. Le langage des sch\' emas},
Inst. Hautes Etudes Sci. Publ. Math., (1960), p.~228.



\bibitem{ega-4-3}
\leavevmode\vrule height 2pt depth -1.6pt width 30pt, 
{\em \'{E}l\'ements de g\'eom\'etrie alg\'ebrique. {IV}. \'{E}tude locale des sch\'emas et des
morphismes de sch\'emas. {III}}, 
Inst. Hautes \'Etudes Sci. Publ. Math. (1966), p.~255.


 

\bibitem{malcev-lef}
{\sc A.~Mal'cev}, 
{\em On isomorphic matrix representations of infinite groups}, 
Rec. Math. [Mat. Sbornik] N.S. 8 (50), 1940, pp.~405--422. 

\bibitem{malcev-lef-2}
\leavevmode\vrule height 2pt depth -1.6pt width 30pt,
{\em On the embedding of group algebras in division algebras}, 
Dokl. Akad. Nauk SSSR 60, 1948,  pp.~1499--1501. 


 

\bibitem{neumann}
{\sc J.~von~Neumann}, 
{\em The general and logical theory of automata}, Cerebral Mechanisms in Behavior. The Hixon Symposium, John Wiley \& Sons Inc., New York, N. Y., 1951, pp.~1--31; discussion, pp. 32--41. MR0045446 (13,586a)

\bibitem{nowak}
{\sc K.~Nowak}, {\em Injective endomorphisms of algebraic varieties}, Math.
  Ann., 299 (1994), pp.~769--778.


\bibitem{phung-2020}
{\sc X.K.~Phung}, 
{\em On sofic groups, Kaplansky's conjectures, and endomorphisms of pro-algebraic groups}, Journal of Algebra, 562 (2020), pp.~537--586. 

\bibitem{phung-dcds}
\leavevmode\vrule height 2pt depth -1.6pt width 30pt,
{\em 
Shadowing for families of endomorphisms of generalized group shifts}, to appear in Discrete and Continuous Dynamical Systems. 	arXiv:2011.01524

\bibitem{phung-israel}
\leavevmode\vrule height 2pt depth -1.6pt width 30pt,
{\em On dynamical finiteness properties of algebraic group shifts}, to appear in Israel Journal of Mathematics. arXiv:2010.04035 


\bibitem{phung-post-surjective}
\leavevmode\vrule height 2pt depth -1.6pt width 30pt,
{\em On symbolic group varieties and dual surjunctivity}, preprint, 2021. 	arXiv:2111.02588

\bibitem{phung-geometric}
\leavevmode\vrule height 2pt depth -1.6pt width 30pt,
{\em A geometric generalization of Kaplansky's direct finiteness conjecture}, preprint, 2021. arXiv:2111.07930 

\bibitem{phung-weakly}
\leavevmode\vrule height 2pt depth -1.6pt width 30pt, 
{\em Weakly surjunctive groups and symbolic group varieties}, preprint, 2021. 	arXiv:2111.13607

\bibitem{strooker}
{\sc J.~R.~Strooker}, 
{\em Lifting projectives}, 
Nagoya Math. J., 27 (1966), pp.~747--751. 

\bibitem{vasconcelos}
{\sc W.~V.~Vasconcelos}, 
{\em On finitely generated flat modules},  Trans. Amer. Math. Soc. (1969), pp.~505--512.


\bibitem{varadarajan}
{\sc K. Varadarajan}, 
{\em Hopfian and co-Hopfian objects}, 
Publicacions Matematiques, Vol. 36, No. 1 (1992), pp. 293-317 

\bibitem{weiss-sgds}
{\sc B.~Weiss}, 
{\em Sofic groups and dynamical systems}, 
Sankhy\=a Ser. A 62 (2000), no. 3, pp.~350--359. 
Ergodic theory and harmonic analysis (Mumbai, 1999). 
\end{thebibliography}

\end{document}